\newtheorem{theorem}{Theorem}[section]
\newtheorem{lem}[theorem]{Lemma}
\newtheorem{cor}[theorem]{Corollary}
\theoremstyle{definition}
\theoremstyle{remark}
\newtheorem{remark}[theorem]{Remark}
\def\R{\mathbb{R}}
\def\Z{\mathbb{Z}}
\def\N{\mathbb{N}}
\def\cB{\mathcal{B}}
\def\S{\mathbb{S}}
\def\g{\gamma}
\def\d{\delta}
\def\a{\alpha}
\def\be{\textbf{e}}
\DeclareMathOperator{\diam}{diam}
\DeclareMathOperator{\dist}{dist}
\begin{document}

\title{Quasisymmetric extension on the real line}

\author{Vyron Vellis}
\address{Department of Mathematics and Statistics, P.O. Box 35 (MaD), FI-40014 University of Jyv\"askyl\"a, Jyv\"askyl\"a, Finland}
\curraddr{Department of Mathematics and Statistics, P.O. Box 35 (MaD), FI-40014 University of Jyv\"askyl\"a, Jyv\"askyl\"a, Finland}
\email{vyron.v.vellis@jyu.fi}
\thanks{The author was supported by the Academy of Finland project 257482.}

\subjclass[2010]{Primary 30C65; Secondary 30L05}

\date{\today}

\keywords{quasisymmetric extension, relatively connected sets}

\begin{abstract}
We give a geometric characterization of the sets $E\subset \R$ for which every quasisymmetric embedding $f: E \to \R^n$ extends to a quasisymmetric embedding $f:\R\to\R^N$ for some $N\geq n$.
\end{abstract}

\maketitle

\section{Introduction}\label{sec:intro}

Suppose that $E$ is a subset of a metric space $X$ and $f$ is a quasisymmetric embedding of $E$ into some metric space $Y$. When is it possible to extend $f$ to a quasisymmetric embedding of $X$ into $Y'$ for some metric space $Y'$ 
containing $Y$? Questions related to quasisymmetric extensions have been considered by Beurling and Ahlfors \cite{BA}, Ahlfors \cite{Ah,Ahl}, Carleson \cite{CA}, Tukia and V\"ais\"al\"a \cite{TukVais} and Kovalev and Onninen 
\cite{Kovalev}. 

Tukia and V\"ais\"al\"a \cite{TuVaext2} showed that for $M=\R^p,\S^p$, any quasisymmetric mapping $f:M \to \R^{n}$, with $n>p$, extends to a quasisymmetric homeomorphism of $\R^{n}$ when $f$ is locally close to a similarity. Later, 
V\"ais\"al\"a \cite{Vaisext} extended this result to all compact, co-dimension $1$, $C^1$ or piecewise linear manifolds $M$ in $\R^n$.

In this article we are concerned with the case $X=\R$ and $Y=\R^n$. Specifically, given a set $E \subset \R$ and a quasisymmetric embedding $f$ of $E$ into $\R^n$, we ask when is it possible to extend $f$ to a quasisymmetric 
embedding of $\R$ into $\R^N$ for some $N\geq n$. While any bi-Lipschitz embedding of a compact set $E\subset \R$ into $\R^n$ extends to a bi-Lipschitz embedding of $\R$ into $\R^N$ for some $N\geq n$ \cite{DS}, the same is not true 
for quasisymmetric embeddings. In fact, there exists $E \subset \R$ and a quasisymmetric embedding $f:E \to \R$ that can not be extended to a quasisymmetric embedding $F : \R \to \R^N$ for any $N$; see e.g. \cite[p. 89]{Heinonen}. 
Thus, more regularity for sets $E$ should be assumed.

Following Trotsenko and V\"ais\"al\"a \cite{TroVa}, a metric space $X$ is termed \emph{$M$-relatively connected} for some $M>1$ if, for any point $x\in X$ and any $r>0$ with $\overline{B}(x,r) \neq X$, either 
$\overline{B}(x,r) = \{x\}$ or $\overline{B}(x,r) \setminus B(x,r/M) \neq \emptyset$. A metric space $X$ is called relatively connected if it is $M$-relatively connected for some $M\geq 1$.

With this terminology, our main theorem is stated as follows.

\begin{theorem}\label{thm:main2}
If $E\subset \R$ is $M$-relatively connected and $f: E \to \R^n$ is $\eta$-quasisymmetric then $f$ extends to an $\eta'$-quasisymmetric embedding $F:\R\to \R^{n+n_0}$ where $n_0$ depends only on $M$ and $\eta$ while $\eta'$ depends 
only on $M$, $\eta$ and $n$.
\end{theorem}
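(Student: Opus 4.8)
The plan is as follows.

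\smallskip
\emph{Reductions and the structure of $E$.} Since an $\eta$-quasisymmetric map extends to the metric closure with the same modulus, and the closure of an $M$-relatively connected subset of $\R$ is $M'$-relatively connected for some $M'=M'(M)$, I would first assume that $E$ is closed; the cases $E=\emptyset$, $E$ a singleton and $E=\R$ are trivial, so assume none of these. Write $\R\setminus E=\bigcup_j I_j$ as a union of open intervals, the \emph{gaps} of $E$, at most two of which are unbounded. The first technical ingredient is a \emph{structure lemma} extracted from $M$-relative connectedness: every bounded gap $I=(a,b)$ not adjacent to an unbounded gap has \emph{anchors} $a^-,b^+\in E$ with $a^-<a<b<b^+$ and $a-a^-\asymp_M|I|\asymp_M b^+-b$; moreover $E$ is sufficiently log-dense near $a$ and near $b$ at every scale between $|I|$ and the distance from the endpoint to the nearest point of $E$ (with constants depending only on $M$). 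Together with the $\eta$-quasisymmetry of $f$ this gives $|f(a)-f(b)|\asymp|f(a^-)-f(a)|\asymp|f(b)-f(b^+)|$ and two-sided control of $|f(x)-f(a)|$ for $x\in E$ near $a$ in terms of $|f(a)-f(b)|$ and a modulus built from $\eta$. I also record that $E\subset\R$ is doubling with an \emph{absolute} constant; this is what ultimately keeps $n_0$ independent of $n$.

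\smallskip
\emph{Construction of $F$.} Put $F=f$ on $E$. Fix a model ``arch'': a semicircular arc whose height coordinate is monotone on each half. For each bounded gap $I=(a,b)$ choose a \emph{colour} $c(I)\in\{1,\dots,n_0\}$, $n_0=n_0(M,\eta)$, by greedily colouring the graph on gaps in which $I$ and $J$ are joined when they lie close together relative to $\min(|I|,|J|)$; $M$-relative connectedness bounds the degree of this graph, and this is where $M$ (and, through the admissible range of scales, $\eta$) enters the dimension count. Let $e_1,\dots,e_{n_0}$ be the standard basis of the extra $\R^{n_0}$ factor of $\R^{n+n_0}$. Map $\bar I$ homeomorphically onto a copy of the model arch lying in the genuine $2$-plane $\mathrm{span}\{f(b)-f(a),e_{c(I)}\}$ (genuine because $f(b)-f(a)\in\R^n\perp e_{c(I)}$), of diameter comparable to $|f(a)-f(b)|$, running from $f(a)$ to $f(b)$ and leaving each endpoint in the $\pm e_{c(I)}$ direction. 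The essential point is the \emph{parametrization}: near $a$ the radial function $t\mapsto|F(t)-f(a)|$ is chosen to interpolate the actual values $|f(x)-f(a)|$, $x\in E$ near $a$ (possible by the log-density in the structure lemma), and symmetrically near $b$; below the distance from $a$, resp.\ $b$, to the nearest point of $E$ one parametrizes affinely. The at most two unbounded gaps are treated by the analogous but simpler recipe: a ray into a fresh coordinate direction, reparametrized to match $f$ at scales below $\diam E$. This defines $F:\R\to\R^{n+n_0}$.

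\smallskip
\emph{Verification of quasisymmetry.} Since $\R$ is connected and doubling, it suffices, by the passage from weak to strong quasisymmetry (see \cite{TukVais}), to prove that $F$ is an injective, weakly $H$-quasisymmetric embedding with $H=H(M,\eta,n)$; the resulting $\eta'$ then depends only on $M$, $\eta$ and $n$. Injectivity holds because $f$ is injective on $E$, each arch is injective, an arch meets $\R^n\supset f(E)$ only at its two endpoints (its height coordinate being monotone on each half), two arches of different colours meet only at a point of $E$ common to both gaps (their height coordinates lying on orthogonal lines), and same-coloured arches are separated by the colouring. For the weak three-point inequality one argues by cases on the positions of $p,q,s$ with $|p-q|\le|p-s|$, after the usual reduction to triples of comparable scale: (i) all three in $E$ — the hypothesis; (ii) all three in one closed gap — the internal quasisymmetry of the model arch with its reparametrization; (iii) mixed configurations — here the $\R^n$-component of $F$ on a gap stays near the relevant endpoint while its height component is comparable to the total displacement from that endpoint, so $|F(p)-F(s)|$ is comparable to the larger of the corresponding endpoint quantities, and these are governed by the $\eta$-quasisymmetry of $f$ together with the choice of reparametrization, which was made precisely so that gap and $E$ contributions match.

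\smallskip
\emph{Expected main obstacle.} The crux is to meet two competing requirements over all gaps simultaneously: each gap's reparametrization must faithfully reproduce the local behaviour of $f$ near that gap's endpoints — so that no density mismatch between a gap and the portion of $E$ abutting it destroys quasisymmetry, this being exactly the mechanism behind the classical non-extendability examples — while the arches filling distinct gaps must be kept not merely disjoint but quantitatively separated, so that the global three-point condition survives. Relative connectedness is what makes both feasible with uniform bounds: it prevents the gap structure from telescoping too fast, provides the anchors and the log-density driving the reparametrization, and bounds the combinatorial complexity of the colouring. I expect case (iii) — especially triples spread over two or three distinct, possibly adjacent or nested, gaps of widely different scales — together with the verification that the data-dependent reparametrizations are uniformly quasisymmetric self-maps of an interval, to be the most delicate part of the argument.
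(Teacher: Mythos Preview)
Your proposal is correct and follows essentially the same route as the paper: a greedy colouring of the gaps (the paper's Lemma~3.8) to bound the number $n_0$ of extra dimensions, triangular ``bridges'' in place of your semicircular arches, and a reparametrization of each bridge obtained by reflecting into the gap a sequence of points of $E$ accumulating at the endpoint and matching the values $|f(a_k)-f(a_I)|$ there (the paper's Section~3.3), followed by a case-by-case verification of weak quasisymmetry. The only organisational difference is that the paper first reduces to the case where $E$ is closed, unbounded on both sides, and uniformly perfect (Lemmas~3.1 and~3.4), whereas you handle the unbounded gaps and isolated endpoints directly inside the construction; both work.
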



On the other hand, it follows from a theorem of Trotsenko and V\"ais\"al\"a \cite{TroVa} that if $E\subset \R$ is not relatively connected, then there exists a quasisymmetric mapping $f:E \to\R$ that admits no quasisymmetric 
extension $F : \R \to\R^N$ for any $N\geq 1$; see Corollary \ref{cor:relconnec}.

A subset $E$ of a metric space $X$ is said to have the \emph{quasisymmetric extension property in $X$} if every quasisymmetric mapping $f:E \to X$ that can be extended homeomorphically in $X$ can also be extended 
quasisymmetrically in $X$. The question of characterizing such sets $E$, given a space $X$, poses formidable difficulties due to the topological complexity of $X$. For instance, $\S^1$ and $\R$ have the quasisymmetric extension 
property in $\R^2$ \cite{Ah}, but it is unknown whether $\S^n$ or $\R^n$ have this property in $\R^{n+1}$ when $n\geq 2$.

The sets $E \subset \R$ that have the quasisymmetric extension property in $\R$ are characterized by the relative connectedness.

\begin{theorem}\label{thm:main}
A set $E\subset \R$ has the quasisymmetrc extension property in $\R$ if and only if it is relatively connected.
\end{theorem}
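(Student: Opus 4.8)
The plan is to prove Theorem~\ref{thm:main} by combining Theorem~\ref{thm:main2} with the converse direction coming from \cite{TroVa} (see Corollary~\ref{cor:relconnec}). The two implications are handled separately, and in each case the issue is to relate the abstract ``quasisymmetric extension property in $\R$'' (which allows only target $\R$, not some larger $\R^N$) to the results already quoted.

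\medskip

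\emph{Relatively connected $\Rightarrow$ extension property.} Suppose $E\subset\R$ is relatively connected and $f\colon E\to\R$ is $\eta$-quasisymmetric and extends to a homeomorphism of $\R$. First I would observe that a homeomorphic extension of $f$ to $\R$ forces $f$ to be monotone on $E$ (it preserves or reverses the order induced from $\R$), and after possibly postcomposing with $x\mapsto -x$ we may assume $f$ is order-preserving. By Theorem~\ref{thm:main2} applied with $n=1$, $f$ extends to an $\eta'$-quasisymmetric embedding $F\colon\R\to\R^{1+n_0}$. The remaining task is to \emph{push this extension down into $\R$}: I would use the fact that $\R$ is (quasisymmetrically) homogeneous and that the image $F(\R)$ is a quasisymmetrically embedded line, together with the order information, to replace $F$ by a genuine self-map $G$ of $\R$ that still agrees with $f$ on $E$. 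Concretely, since $f$ is already order-preserving on $E\subset\R$, one constructs $G\colon\R\to\R$ directly by interpolating $f$ across the complementary intervals of $\overline E$ in a controlled (e.g.\ affine, or power-type) fashion; relative connectedness of $E$ is exactly what guarantees that the gaps of $\overline E$ and the behaviour of $f$ at the two endpoints of each gap are comparable, so that the interpolation does not destroy quasisymmetry. This reduces the problem to the purely one-dimensional statement that an order-preserving $\eta$-quasisymmetric map on a relatively connected $E\subset\R$ admits an $\eta''$-quasisymmetric extension to $\R$, with $\eta''$ depending on $\eta$ and $M$ only — and this one-dimensional extension is the technical heart, carried out by a Beurling--Ahlfors-type gap-filling argument where relative connectedness controls the ratios of adjacent gap lengths.

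\medskip

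\emph{Extension property $\Rightarrow$ relatively connected.} For the converse I argue by contrapositive: if $E\subset\R$ is not relatively connected, then by the theorem of Trotsenko and V\"ais\"al\"a (Corollary~\ref{cor:relconnec}) there is an $\eta$-quasisymmetric map $f\colon E\to\R$ admitting no quasisymmetric extension $F\colon\R\to\R^N$ for any $N\geq1$; in particular none with $N=1$. The only gap to fill is that the definition of the extension property also requires $f$ to be \emph{homeomorphically} extendable to $\R$; so I would either invoke the stronger form of the Trotsenko--V\"ais\"al\"a construction, in which the obstruction map $f$ is already monotone on $E$ (hence trivially extends to a homeomorphism of $\R$), or, if Corollary~\ref{cor:relconnec} is stated only for general $f$, modify $f$ near the ``bad'' scale so that it becomes monotone while retaining the failure of quasisymmetric extendability (the failure is detected by a single pair of nested balls, so a monotone rearrangement away from that pair does not repair it). Either way $E$ fails the quasisymmetric extension property in $\R$, completing the contrapositive.

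\medskip

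The main obstacle is the downward step in the forward direction: Theorem~\ref{thm:main2} only gives an extension into a higher-dimensional $\R^{n+n_0}$, whereas the extension property demands a self-map of $\R$. One must show that for $n=1$ the extra dimensions are unnecessary, i.e.\ an order-preserving quasisymmetric map of a relatively connected subset of the line extends \emph{within} the line with quantitative control. I expect this to require redoing the gap-filling part of the proof of Theorem~\ref{thm:main2} in the special codimension-zero case, using the linear order on $\R$ to interpolate monotonically across complementary intervals and invoking relative connectedness to bound the distortion introduced at each gap and between neighbouring gaps.
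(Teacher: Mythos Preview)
Your proposal is essentially correct and ends at the same approach as the paper, though it takes an unnecessary detour to get there. The converse direction matches the paper exactly: Corollary~\ref{cor:relconnec} already produces a \emph{monotone} obstruction map, so it extends homeomorphically to $\R$ and the contrapositive goes through without the modification you worry about.

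For the forward direction, the paper does precisely what you conclude is necessary in your final paragraph: it reruns the construction behind Theorem~\ref{thm:main2} in the case $n=1$, with the single observation that since $f$ is increasing, the bridge $\cB_{n_I}(I)$ may be taken to be the interval $[f(a_I),f(b_I)]\subset\R$; monotonicity makes these intervals pairwise disjoint, so the extra $n_0$ dimensions are never needed and the piecewise construction of $f_I$ from Section~\ref{sec:reflect} lands in $\R$. Your initial idea of applying Theorem~\ref{thm:main2} as a black box and then ``pushing down'' the resulting curve in $\R^{1+n_0}$ does not actually contribute---there is no general projection of a quasisymmetric arc in $\R^{1+n_0}$ back to $\R$ that fixes $f(E)$---and you rightly abandon it. One caution: plain affine interpolation across each gap, which you float as a possibility, is not what the paper uses; the map $f_I$ is piecewise affine with breakpoints $a_k'$, $b_k'$ reflecting points of $E$ near $a_I$, $b_I$, and this is what makes the extension match the modulus of continuity of $f$ at the gap endpoints.
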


The arguments used in the proof of Theorem \ref{thm:main} apply verbatim in the case $X= \S^1$ and $E\subset \S^1$. Thus, if $X$ is quasisymmetric homeomorphic to either $\R$ or $\S^1$, then a set $E \subset X$ has the quasisymmetric
extension property in $X$ if and only if $E$ is relatively connected.

In dimensions $n\geq 2$, however, Theorem \ref{thm:main} fails even for small sets such as the Cantor sets. In Section \ref{sec:examples} we show that for each $n\geq 2$, there exists a relatively connected Cantor set 
$E \subset \R^n$ and a bi-Lipschitz mapping $f : E \to \R^n$ which admits a homeomorphic extension in $\R^n$, but not a quasisymmetric extension in $\R^n$; see Remark \ref{rem:Cantor}.

\subsection*{Acknowledgements} The author wishes to thank Pekka Koskela for bringing this problem to his attention, Tuomo Ojala for various discussions and the anonymous 
referee for valuable comments on the manuscript. 



\section{Preliminaries}\label{sec:prelim}

In the following, given an open bounded interval $I=(a,b)\subset \R$, we denote by $|I|$ its length $b-a$; if $I=\emptyset$ then $|I| = 0$. As usual, $a \vee b$ and $a\wedge b$ denote the maximum and minimum, respectively, of two 
real numbers $a$ and $b$. Finally, for two points $x,y\in\R^n$, we denote by $[x,y]$ the line segment in $\R^n$ with endpoints $x$ and $y$.

\subsection{Mappings}

A homeomorphism $f\colon (X,d) \to (Y,d')$ between two metric spaces is called $L$-\emph{bi-Lipschitz} for some $L>1$ if both $f$ and $f^{-1}$ are $L$-Lipschitz.

A mapping $f\colon (X,d) \to (Y,d')$ is called $\eta$-quasisymmetric if there exists a homeomorphism $\eta \colon [0,+\infty) \to [0,+\infty)$ such that for any $x,a,b \in X$ with $x\neq b$ we have
\[ \frac{d'(f(x),f(a))}{d'(f(x),f(b))} \leq \eta\left ( \frac{d(x,a)}{d(x,b)} \right ).\]
It is a simple consequence of the definition that the composition of a similarity mapping of $\R^n$ and an $\eta$-quasisymmetric mapping between sets of $\R^n$ is $\eta$-quasisymmetric.

If $f$ is $\eta$-quasisymmetric with $\eta(t) = C(t^{\a}\vee t^{1/\a})$ for some $\a\in(0,1]$ and $C>0$ then $f$ is termed \emph{power quasisymmetric} and we say that $f$ is $(C,\a)$-quasisymmetric. An important property of power 
quasisymmetric mappings is that they are bi-H\"older continuous on bounded sets \cite[Corollary 11.5]{Heinonen}.

\begin{lem}\label{lem:Holder}
Suppose that $(X,d)$ is a bounded metric space and $f : (X,d) \to (Y,d')$ is $(C,\a)$-quasisymmetric. There exists $C' >1$ depending only on $C$, $\a$, $\diam{X}$ and $\diam{f(X)}$ such that for all $x,y \in E$,
\[ (C')^{-1} d(x,y)^{1/\a} \leq d'(f(x),f(y)) \leq C' d(x,y)^{\a}. \]
\end{lem}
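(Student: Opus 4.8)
The plan is to follow the classical argument behind \cite[Corollary 11.5]{Heinonen}, the only wrinkle being that a bounded metric space need not contain a pair of points realizing its diameter, so the auxiliary ``distant'' points have to be produced by hand. First I would clear the degenerate cases: if $\diam X = 0$ there is nothing to prove, while $\diam X > 0$ forces $\diam f(X) > 0$ because $f$, being a quasisymmetric embedding, is injective; and both inequalities are trivial when $x = y$, so we may assume $x \ne y$. The elementary fact I would record is that \emph{in any bounded metric space $Z$, every $z \in Z$ admits a point $w \in Z$ with $d(z,w) \ge \tfrac13 \diam Z$}: indeed $d(u,v) \le d(u,z) + d(z,v) \le 2\sup_{w \in Z} d(z,w)$ for all $u,v \in Z$, so $\sup_{w} d(z,w) \ge \tfrac12\diam Z$, and one picks $w$ attaining at least $\tfrac13\diam Z$.

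For the upper bound, fix $x,y \in X$ and use the observation in $X$ to pick $b$ with $d(x,b) \ge \tfrac13\diam X$. Applying the $(C,\alpha)$-quasisymmetry inequality to the triple $(x,y,b)$ and using $d'(f(x),f(b)) \le \diam f(X)$ gives
\[ d'(f(x),f(y)) \le \diam f(X)\cdot C\big( (\tfrac{d(x,y)}{d(x,b)})^{\alpha} \vee (\tfrac{d(x,y)}{d(x,b)})^{1/\alpha} \big). \]
If $d(x,y) \le d(x,b)$, only the $\alpha$-power survives and, since $d(x,b) \ge \tfrac13\diam X$, the right side is at most $C\,\diam f(X)\,(3/\diam X)^{\alpha}\, d(x,y)^{\alpha}$. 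If instead $d(x,y) > d(x,b) \ge \tfrac13\diam X$, I discard the quasisymmetry and use $d'(f(x),f(y)) \le \diam f(X) \le \diam f(X)\,(3 d(x,y)/\diam X)^{\alpha}$. Either way $d'(f(x),f(y)) \le C_1\, d(x,y)^{\alpha}$ for a constant $C_1$ depending only on $C,\alpha,\diam X,\diam f(X)$.

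For the lower bound, fix $x,y \in X$ and now pick $b' \in X$ with $d'(f(x),f(b')) \ge \tfrac13\diam f(X)$ --- the same observation, applied inside the image $f(X)$. Applying the $(C,\alpha)$-quasisymmetry inequality to the triple $(x,b',y)$, then using the monotonicity of $\eta$ together with $d(x,b') \le \diam X$, yields
\[ \frac{d'(f(x),f(b'))}{d'(f(x),f(y))} \le \eta\big(\tfrac{d(x,b')}{d(x,y)}\big) \le \eta\big(\tfrac{\diam X}{d(x,y)}\big) = C\big(\tfrac{\diam X}{d(x,y)}\big)^{1/\alpha}, \]
where the last equality uses $\diam X/d(x,y) \ge 1$. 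Rearranging and inserting $d'(f(x),f(b')) \ge \tfrac13\diam f(X)$ gives $d'(f(x),f(y)) \ge C_2^{-1}\, d(x,y)^{1/\alpha}$ with $C_2 = 3C(\diam X)^{1/\alpha}/\diam f(X)$. Taking $C' = \max\{C_1,C_2,1\}$ completes the proof.

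I do not expect a serious obstacle; the argument is essentially bookkeeping. The only points that need a little attention are (i) choosing the auxiliary point in the \emph{domain} for the upper estimate but in the \emph{image} for the lower one, and (ii) noticing, in the upper estimate, that the unfavorable $1/\alpha$-branch of $\eta$ only appears when $d(x,y)$ is already comparable to $\diam X$, where the trivial bound $\diam f(X)$ is good enough.
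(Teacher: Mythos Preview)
Your argument is correct and is precisely the standard proof behind \cite[Corollary 11.5]{Heinonen}; the only cosmetic addition is your handling of the case where the diameter is not attained, which is harmless. Note that the paper does not supply its own proof of this lemma---it simply cites Heinonen---so there is nothing further to compare.
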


For doubling connected metric spaces it is known that the quasisymmetric condition is equivalent to a weaker (but simpler) condition known in literature as \emph{weak quasisymmetry}. 

\begin{lem}[{\cite[Theorem 10.19]{Heinonen}}]\label{lem:ext}
Let $I\subset \R$ be an interval and $f:I \to \R^n$ be an embedding for which there exists $H \geq 1$ such that for all $x,y,z \in I$
\begin{equation}\label{eq:weakQS}
|x-y|\leq|x-z| \text{ implies } |f(x)-f(y)|\leq H|f(x)-f(z)|.
\end{equation} 
Then $f$ is $\eta$-quasisymmetric with $\eta$ depending only on $H$ and $n$.
\end{lem}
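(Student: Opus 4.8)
The plan is to prove the stronger statement that $f$ is \emph{power} quasisymmetric, by a chaining argument that uses the convexity of the interval $I$ together with the doubling geometry of $\R^n$. Every estimate will come either from the weak quasisymmetry hypothesis \eqref{eq:weakQS} applied at a well-chosen base point, or from an elementary packing bound in $\R^n$. Fix $x,a,b\in I$ with $a\ne x$ and set $t=|x-a|/|x-b|$; the goal is $|f(x)-f(a)|\le\eta(t)\,|f(x)-f(b)|$. Since $I$ is an interval, the segment between any two of its points lies in $I$, so applying \eqref{eq:weakQS} at $x$ we may replace whichever of $a,b$ is closer to $x$ by the point of the relevant segment at the same distance from $x$; at the cost of a factor $H$ this lets us assume that $a$ and $b$ lie on a common segment issuing from $x$.

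In the regime $t\ge 1$ only \eqref{eq:weakQS} is needed. Place dyadically spaced points $b=w_0,w_1,\dots,w_m=a$ on $[x,a]$ with $|x-w_j|=2^{j}|x-b|$ for $j<m$, so $m\le\lceil\log_2 t\rceil+1$. For consecutive points $|w_j-w_{j+1}|\le|x-w_j|=|w_j-x|$, so \eqref{eq:weakQS} at $w_j$ gives $|f(w_j)-f(w_{j+1})|\le H|f(x)-f(w_j)|$, and the triangle inequality yields $|f(x)-f(w_{j+1})|\le(1+H)|f(x)-f(w_j)|$. Iterating, $|f(x)-f(a)|\le(1+H)^{m}|f(x)-f(b)|\le C\,t^{\log_2(1+H)}|f(x)-f(b)|$, which uses nothing about the target beyond the triangle inequality.

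The core of the proof is the regime $t\le 1$, where I would establish the key improvement: there are $\delta\in(0,\tfrac12)$ and $\lambda>1$, depending only on $H$ and $n$, such that $|x-a|\le\delta|x-b|$ implies $|f(x)-f(a)|\le\lambda^{-1}|f(x)-f(b)|$. Assuming $a\in[x,b]$ with $|x-a|=\delta|x-b|$, put $z_j\in[x,b]$ with $|x-z_j|=2^{j}|x-a|$ for $j=0,\dots,N$, where $N=\lfloor\log_2(1/\delta)\rfloor$ (so $|x-z_N|\le|x-b|$). Write $R=|f(x)-f(b)|$ and $r_0=|f(x)-f(a)|>0$. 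By \eqref{eq:weakQS} at $x$, every $f(z_j)$ lies in $\overline{B}(f(x),HR)$ and $r_0\le H|f(x)-f(z_i)|$ for all $i$. For $i<j$ one has $|z_i-z_j|\ge|x-z_i|=|z_i-x|$, so \eqref{eq:weakQS} at $z_i$ gives $|f(z_i)-f(z_j)|\ge H^{-1}|f(x)-f(z_i)|\ge H^{-2}r_0$. Thus $f(z_0),\dots,f(z_N)$ are $N+1$ points of $\overline{B}(f(x),HR)\subset\R^n$ that are $H^{-2}r_0$-separated, so the packing bound in $\R^n$ forces $N+1\le(CH^{3}R/r_0)^{n}$, i.e. $R/r_0\gtrsim(\log(1/\delta))^{1/n}$. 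Choosing $\delta=\delta(H,n)$ small enough makes $R/r_0$ as large as desired, proving the claim (with room to spare, even $\lambda=2$ after absorbing the reduction factor $H$).

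It remains to assemble the estimates. Iterating the key improvement along a dyadic chain on $[x,b]$ upgrades it to a power bound $|f(x)-f(a)|\le C_1\,t^{\alpha}|f(x)-f(b)|$ for $t\le1$, with $\alpha=\log\lambda/\log(1/\delta)>0$ depending only on $H$ and $n$; combined with the $t\ge1$ estimate this exhibits $f$ as $\eta$-quasisymmetric for an $\eta$ of power type whose data, and hence $\eta$ itself, depend only on $H$ and $n$. The only point where $n$ genuinely enters is the packing estimate inside the key improvement, and this is also the step I expect to be the main obstacle: one has to recognize that the doubling of the target is precisely what forbids a long separated chain, and is therefore the mechanism that turns weak quasisymmetry into quasisymmetry — the remaining arguments are soft chaining that use only convexity of $I$ and the triangle inequality in $\R^n$.
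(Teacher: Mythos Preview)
The paper does not give its own proof of this lemma; it simply quotes it as \cite[Theorem~10.19]{Heinonen}. Your argument is correct and is essentially the standard proof found there, specialized to the case where the source is an interval in $\R$ and the target is $\R^n$: the $t\ge 1$ bound comes from a dyadic telescoping chain using only \eqref{eq:weakQS} and the triangle inequality, while the $t\le 1$ bound comes from the observation that weak quasisymmetry produces many $H^{-2}r_0$-separated image points inside a ball of radius $HR$, so the doubling property of $\R^n$ forces $R/r_0$ to be large. Combining the two regimes yields a power-type $\eta$ depending only on $H$ and $n$, which is exactly Heinonen's conclusion.

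One cosmetic remark: your reduction ``place $a,b$ on a common segment from $x$'' deserves a word of care, since the replacement point must lie in $I$. It does, because $I$ is convex: when $t\ge 1$ you replace $b$ by the point of $[x,a]\subset I$ at distance $|x-b|$ from $x$, and when $t\le 1$ you replace $a$ by the point of $[x,b]\subset I$ at distance $|x-a|$ from $x$; in either case weak quasisymmetry at $x$ shows the relevant image distance changes by at most a factor $H$. With that understood, all steps go through as you wrote them.
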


The next lemma is an immediate corollary to Lemma \ref{lem:ext}.

\begin{lem}\label{lem:jointqs}
Let $I_1,I_2$ be open bounded intervals and $f:I_1\cup I_2 \to \R$ be an embedding. Suppose that there exists $C>1$ such that $|I|/|J| < C$ for all $I,J \in \{I_1,I_2,I_1\cap I_2\}$. If $f|I_1$ and $f|I_2$ are $\eta$-quasisymmetric
then $f|(I_1\cup I_2)$ is $\eta'$-quasisymmetric for some $\eta'$ depending on $\eta$ and $C$.
\end{lem}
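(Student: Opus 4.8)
The plan is to reduce the statement to an application of Lemma~\ref{lem:ext}, verifying the weak quasisymmetry condition \eqref{eq:weakQS} for the map $f$ on the interval $I=I_1\cup I_2$. Since $I_1$ and $I_2$ are overlapping open bounded intervals (note $I_1\cap I_2\neq\emptyset$, as $|I_1\cap I_2|>0$), their union is an interval, so Lemma~\ref{lem:ext} is applicable once we produce a uniform constant $H$. First I would fix points $x,y,z\in I$ with $|x-y|\leq|x-z|$ and split into cases according to which of $I_1,I_2$ contain each of the three points. If all three lie in the same $I_j$, then the weak quasisymmetry estimate is immediate from the $\eta$-quasisymmetry of $f|I_j$ (an $\eta$-quasisymmetric map is in particular $\eta(1)$-weakly quasisymmetric). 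The work is in the mixed cases.

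For the mixed cases I would exploit the bounded-overlap hypothesis: since $|I|/|J|<C$ for all $I,J\in\{I_1,I_2,I_1\cap I_2\}$, the three intervals $I_1$, $I_2$, $I_1\cap I_2$ all have comparable lengths (within a factor $C$), and hence $|I_1\cup I_2|$ is comparable to each as well. This gives a uniform lower bound, in terms of $C$ and $\diam$, on how "deep" any point of $I$ sits relative to the overlap: every point of $I$ is within distance $\lesssim_C |I_1\cap I_2|$ of the midpoint of $I_1\cap I_2$ (or of any chosen reference point $w\in I_1\cap I_2$). The key trick is then to insert such a reference point $w\in I_1\cap I_2$ and compare $|f(x)-f(y)|$ to $|f(x)-f(w)|$ and $|f(w)-f(z)|$ using quasisymmetry within whichever single interval contains the relevant pair. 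Concretely, if, say, $x,y\in I_1$ and $z\in I_2\setminus I_1$, pick $w\in I_1\cap I_2$; then $x,y,w\in I_1$ and $w,z\in I_2$, and one chains the inequalities: $|f(x)-f(y)|\leq\eta(|x-y|/|x-w|)\,|f(x)-f(w)|$, while $|x-w|$ and $|x-z|$ are comparable (both $\lesssim_C$ and $\gtrsim_C$ the length scale of the intervals, using that $x$ and $w$ are confined to intervals of comparable size), and similarly $|f(x)-f(w)|$ is controlled by $|f(x)-f(z)|$ via $\eta$ applied within $I_2$ after possibly inserting another reference point. Bookkeeping these comparisons yields $|f(x)-f(y)|\leq H|f(x)-f(z)|$ with $H$ depending only on $\eta$ and $C$.

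The main obstacle I anticipate is the case analysis bookkeeping when $x$, $y$, $z$ are spread across $I_1$, $I_2$, and $I_1\cap I_2$ in an unbalanced way — in particular when $x$ lies in one interval, $y$ in the other, and the chain of quasisymmetry estimates must be routed through one or two reference points in $I_1\cap I_2$ without the length ratios $|x-w|/|x-z|$ (or their images under $\eta$) blowing up. The clean way to handle this uniformly is to observe, before any case analysis, that because all of $I_1$, $I_2$, $I_1\cap I_2$, $I_1\cup I_2$ have mutually $C$-comparable lengths, any two points $p,q\in I$ with $p\neq q$ satisfy $|p-q|\leq |I|$ and, whenever $w$ is the center of $I_1\cap I_2$, $|p-w|\geq c(C)\,|I|$ for any $p$ in the "far" part — more carefully, one picks $w$ adapted to $x$ so that $|x-w|\gtrsim_C |I|\gtrsim_C|x-y|,|x-z|$. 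With such a reference point the ratios fed into $\eta$ are bounded by a constant depending only on $C$, so $\eta$ evaluated at them is a constant depending only on $\eta$ and $C$, and the chaining closes. Then Lemma~\ref{lem:ext} upgrades the resulting weak quasisymmetry to $\eta'$-quasisymmetry with $\eta'$ depending only on $H$ and $1$ (the target dimension), hence only on $\eta$ and $C$, as claimed.
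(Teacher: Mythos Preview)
Your overall strategy matches the paper's: reduce to Lemma~\ref{lem:ext} by verifying the weak quasisymmetry condition, split into cases, and route the estimates through a reference point $w$ in the overlap $I_1\cap I_2$. The paper in fact takes $w=m$, the midpoint of $I_1\cap I_2$, and splits on whether $x\le m$ or $x\ge m$.

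There is, however, a real gap in your chaining step. You write that ``$|f(x)-f(w)|$ is controlled by $|f(x)-f(z)|$ via $\eta$ applied within $I_2$ after possibly inserting another reference point''. But in the crucial case $x\in I_1\setminus I_2$ and $z\in I_2\setminus I_1$, no single interval $I_j$ contains both $x$ and $z$, so you cannot invoke $\eta$-quasisymmetry of either restriction to compare $|f(x)-f(w)|$ with $|f(x)-f(z)|$; inserting yet another reference point $w'\in I_1\cap I_2$ only reproduces the same difficulty with the pair $(x,w')$ versus $(x,z)$. The fix---and this is what the paper uses but you never invoke---is \emph{monotonicity}: since $f:I_1\cup I_2\to\R$ is a continuous injection on an interval, it is monotone. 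Monotonicity lets you (i) reduce the mixed case to $y<x<z$ with $y\le a_2$ and $z\ge b_1$, collapsing the case analysis substantially, and (ii) conclude $|f(x)-f(w)|\le|f(x)-f(z)|$ directly whenever $x<w<z$, with no further appeal to $\eta$. With this observation your sketch goes through; without it the chain does not close.

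A minor related point: your assertion that one can always choose $w\in I_1\cap I_2$ with $|x-w|\gtrsim_C|I|$ is fine when $x$ is far from $m$, but when $x$ is near $m$ both endpoints of $I_1\cap I_2$ are only $\sim\tfrac12|I_1\cap I_2|$ away---still $\gtrsim_C|I|$, so this is not a problem, but the paper's clean split at $m$ (comparing with $b_1$ when $x\le m$ and with $a_2$ when $x\ge m$) handles this more transparently than the vaguer ``pick $w$ adapted to $x$''.
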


\begin{proof}
If $I_1 \subset I_2$ or $I_2 \subset I_1$ there is nothing to prove. Suppose that $I_1 = (a_1,b_1)$, $I_2 = (a_2,b_2)$ with $a_1< a_2 < b_1< b_2$ and denote by $m$ the center of $I_1\cap I_2$. We show that $f|(I_1\cup I_2)$ 
satisfies (\ref{eq:weakQS}). Let $x,y,z\in I_1\cup I_2$ with $|x-y|\leq |x-z|$. Since $f|I_j$ is monotone for each $j=1,2$, $f| I_1\cup I_2$ is monotone and we may assume that either $y<x<z$ or $z<x<y$. 
Assume the first; the second case is identical

If all three points are in the same $I_j$ there is nothing to prove. Hence, we may assume that $y\leq a_2$ and $z\geq b_1$.

If $x\leq m$ then $|f(x)-f(y)| \leq \eta(\frac{|x-y|}{|x-b_1|})|f(x)-f(z)| \leq \eta(2C)|f(x)-f(z)|$. 

If $x\geq m$ then $|f(x)-f(y)| = |f(x)-f(a_2)| + |f(a_2)-f(y)| 
                               \leq |f(x)-f(a_2)|(1 + \frac{|f(a_2)-f(a_1)|}{|f(a_2)-f(m)|}) 
                               \leq |f(x)-f(a_2)| (1+ \eta(\frac{|a_1-a_2|}{|a_2-m|})) 
                               \leq (1+\eta(2C))|f(x)-f(a_2)| 
                               \leq (1+\eta(2C))\eta(\frac{|x-a_2|}{|x-z|})|f(x)-f(z)| 
                               \leq (1+\eta(2C))\eta(1)|f(x)-f(z)|$
where for the last inequality we used $|x-a_2| \leq |x-y|\leq |x-z|$.
\end{proof}

\subsection{Relatively connected sets}\label{sec:relcon}
Relatively connected sets were first introduced by Trotsenko and V\"ais\"al\"a \cite{TroVa} in the study of spaces for which every quasisymmetric mapping is power quasisymmetric. The definition
given in \cite{TroVa} is equivalent to the one in Section \ref{sec:intro} quantitatively \cite[Theorem 4.11]{TroVa}. 

Relative connectedness is a weak form of the well known notion of uniform perfectness. A metric space $X$ is \emph{$c$-uniformly perfect} for some $c>1$ if for all $x\in X$, $\overline{B}(x,r)\neq X$ implies 
$\overline{B}(x,r)\setminus B(x,r/c) \neq \emptyset$. The difference between the two notions is that relatively connected sets allow isolated points. In particular, if $E$ is $c$-uniformly perfect, then it is $M$-relatively 
connected for all $M>c$, and if $E$ is $M$-relatively connected and has no isolated points, then it is $(2M+1)$-uniformly perfect \cite[Theorem 4.13]{TroVa}. 

The connection between relative connectedness and power quasisymmetric mappings is illustrated in the following theorem from \cite{TroVa}.

\begin{theorem}[{\cite[Theorem 6.20]{TroVa}}]\label{thm:TroVa}
A subset $E$ of a metric space $X$ is relatively connected if and only if every quasisymmetric map $f:E \to X$ is power quasisymmetric. 
\end{theorem}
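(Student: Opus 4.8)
The plan is to prove the two implications of the equivalence separately. The substantive direction is that relative connectedness forces every quasisymmetric map to be power quasisymmetric, which I would prove by a direct chaining argument; the converse I would obtain in contrapositive form, producing from a failure of relative connectedness a quasisymmetric map that is not power quasisymmetric.

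\emph{Relative connectedness $\Rightarrow$ power quasisymmetry.} Let $E$ be $M$-relatively connected and $f\colon E\to X$ be $\eta$-quasisymmetric; we may assume $f$ is nonconstant, so $\eta(1)\geq 1$. Fix $\mu>1$ large enough that $\eta(1/\mu)\leq 1/2$. Given a base point $x\in E$ and points $a,b\in E\setminus\{x\}$ with $0<d(x,a)\leq d(x,b)$, put $t=d(x,b)/d(x,a)$ and build a chain $a=y_0,y_1,\dots,y_k=b$ as follows: given $y_{i-1}$ at distance $s=d(x,y_{i-1})$ from $x$, if $d(x,b)\leq\mu M s$ set $y_i=b$ and stop; otherwise $\overline B(x,\mu M s)$ contains $y_{i-1}\neq x$ and omits $b$, so it is neither $\{x\}$ nor $E$, and relative connectedness yields $y_i$ with $\mu s\leq d(x,y_i)\leq\mu M s$. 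Every step then satisfies $1<d(x,y_i)/d(x,y_{i-1})\leq\mu M$, and every step but the last satisfies $d(x,y_i)/d(x,y_{i-1})\geq\mu$; since the ratios telescope to $t$, we get $\mu^{k-1}\leq t\leq(\mu M)^k$. Multiplying the $\eta$-quasisymmetry inequalities along the chain,
\[
\frac{d'(f(x),f(b))}{d'(f(x),f(a))}\leq\eta(\mu M)^{k}\leq\eta(\mu M)\,t^{\log\eta(\mu M)/\log\mu},
\]
and reading the chain in the reverse order, where each step but the last contributes a factor $\eta(d(x,y_{i-1})/d(x,y_i))\leq\eta(1/\mu)\leq 1/2$ and the last at most $\eta(1)$,
\[
\frac{d'(f(x),f(a))}{d'(f(x),f(b))}\leq 2\eta(1)\,2^{-k}\leq 2\eta(1)\,t^{-\log 2/\log(\mu M)}.
\]
Since $x,a,b$ were arbitrary (the cases $a=x$ or $f$ constant being trivial), these two polynomial bounds give $f$ a distortion function of the form $r\mapsto C(r^{\alpha}\vee r^{1/\alpha})$ with $\alpha\in(0,1]$; that is, $f$ is power quasisymmetric.

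\emph{The converse.} Suppose $E$ is not relatively connected. Then for every $M>1$ one finds $x\in E$ and radii $0<\rho_1<\rho_2$ with $\rho_2/\rho_1\geq M$, together with points of $E$ at distance comparable to $\rho_1$ and to $\rho_2$ from $x$ but none at distance strictly between $\rho_1$ and $\rho_2$: an ``empty annulus'' of modulus $\geq M$ straddled by $E$ on both sides. Taking such data for $M=M_j\to\infty$ and passing to a subsequence, I would arrange the regions to be suitably separated (nested and shrinking, or pairwise disjoint). Because the $j$-th empty annulus has modulus $\geq M_j$, there is room to glue onto $E$ a similarity that magnifies the $j$-th inner ball by a factor $\lambda_j$ with $\lambda_j/M_j\to 0$ (this keeps each individual gluing quasisymmetric) but $\log\lambda_j/\log M_j\to 1$ (so the magnification is essentially as large as the gap permits); concretely one can take $\lambda_j=M_j^{1-1/j}$ with the $M_j$ growing rapidly. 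The nested/separated placement of the gluing regions lets the infinitely many gluings assemble into a single quasisymmetric embedding $f\colon E\to X$. On the triple realizing the $j$-th gap, $f$ turns a distance ratio comparable to $1/M_j$ into one comparable to $\lambda_j/M_j=M_j^{-1/j}$; since $M_j^{-1/j}$ eventually exceeds $(C/M_j)^{\alpha}$ however small $\alpha>0$ is, the optimal distortion function of $f$ is not of power type. Hence $f$ is quasisymmetric but not power quasisymmetric, contradicting the hypothesis.

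The forward implication is essentially mechanical once the chain is in place. The genuine obstacle is the verification in the converse that the infinitely many similarity gluings, whose magnification factors $\lambda_j$ are \emph{unbounded}, assemble into a map that is quasisymmetric for a single (necessarily non-power) distortion function: this requires the precise balance $\lambda_j/M_j\to 0$ with $\lambda_j=M_j^{1-o(1)}$ and a careful use of the nested or well-separated placement of the gluing regions so that distortions at different scales do not compound. This is the heart of the argument for \cite[Theorem 6.20]{TroVa}.
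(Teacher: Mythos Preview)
The paper does not prove this statement: Theorem~\ref{thm:TroVa} is quoted verbatim from Trotsenko--V\"ais\"al\"a \cite[Theorem~6.20]{TroVa} and used as a black box (notably in Corollary~\ref{cor:relconnec} and Lemma~\ref{lem:isolated}). There is therefore no ``paper's own proof'' to compare against.

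On the substance of your sketch: the forward implication via chaining is the standard argument and is carried out correctly; the choice of $\mu$ with $\eta(1/\mu)\le 1/2$ and the telescoping estimate $\mu^{k-1}\le t\le(\mu M)^k$ are exactly what is needed, and the two resulting polynomial bounds combine into a power distortion function.

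For the converse, your outline has the right shape---empty annuli of unbounded modulus, magnification by $\lambda_j=M_j^{1-1/j}$, and a gluing---but two points deserve flagging. First, the theorem as stated has target $X$, an arbitrary metric space containing $E$; your ``similarities that magnify the inner ball'' presuppose Euclidean-type structure that $X$ need not have. In Trotsenko--V\"ais\"al\"a the map is actually built as a self-map of $E$ (or into a suitably snowflaked copy), so the construction must be phrased intrinsically rather than via ambient similarities. Second, you correctly identify as ``the genuine obstacle'' the verification that the glued map is quasisymmetric with a single $\eta$, but what you have written is still only a plan: the nested-versus-separated dichotomy has to be set up carefully (one cannot in general assume either configuration without passing to subsequences and possibly rescaling), and the cross-scale estimates for triples straddling several gluing regions are where the work lies. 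Your sketch would be accepted as an outline, but not yet as a proof of the hard direction.
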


The necessity of relative connectedness for extensions of quasisymmetric mappings on $\R$ follows now as a corollary. 

\begin{cor}\label{cor:relconnec}
If $E\subset\R$ is not relatively connected, then there exists a monotone quasisymmetric mapping $f:E \to \R$ such that, for every metric space $Y$ containing the Euclidean line $\R$, there exists no quasisymmetric extension 
$F:\R \to Y$ of $f$.
\end{cor}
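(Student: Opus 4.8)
The plan is to derive Corollary \ref{cor:relconnec} as a consequence of Theorem \ref{thm:TroVa} together with the observation that extendability forces power quasisymmetry. Suppose $E \subset \R$ is not relatively connected. By Theorem \ref{thm:TroVa} (the easy direction of it, namely the failure of the ``only if'' implication), there exists a quasisymmetric map $g: E \to \R$ which is \emph{not} power quasisymmetric. Replacing $g$ by a monotone map: since $E \subset \R$, any $\eta$-quasisymmetric $g: E \to \R$ is either increasing or decreasing (quasisymmetric embeddings of subsets of $\R$ into $\R$ are monotone), so after possibly composing with $x \mapsto -x$ we may take $g$ monotone. Set $f := g$; this is our candidate map.

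Next I would argue by contradiction: suppose $F: \R \to Y$ is an $\eta'$-quasisymmetric extension of $f$, where $Y$ is a metric space containing the Euclidean line $\R$ (so that $F|_E = f$ takes values in $\R \subset Y$, but $F$ may range more widely in $Y$). The key point is that $\R$ is connected and doubling, hence $F(\R)$, being quasisymmetrically homeomorphic to $\R$... more precisely, $F$ restricted to any bounded interval is quasisymmetric on a bounded connected doubling space, and by the standard fact that quasisymmetric maps on connected doubling spaces are power quasisymmetric (this is exactly the content of Theorem \ref{thm:TroVa} applied to $\R$, which is trivially relatively connected since it is connected), $F|_I$ is $(C,\a)$-quasisymmetric for each bounded interval $I$, with $C, \a$ depending only on $\eta'$ and $I$ — in fact uniformly on bounded intervals after normalization. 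Then $f = F|_E$ inherits power quasisymmetry: the restriction of a power quasisymmetric map is power quasisymmetric with the same data. This contradicts the choice of $f$ as a non–power–quasisymmetric map.

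There is one subtlety to handle with care: Theorem \ref{thm:TroVa} as stated gives power quasisymmetry of maps $f: E \to X$ only when $E$ is relatively connected \emph{as a subset of} $X$; here the relevant space in which power quasisymmetry is automatic is the \emph{domain} $\R$ (which is connected, hence relatively connected), and the statement we need is that a quasisymmetric map \emph{from} a relatively connected space is power quasisymmetric. This is precisely the ``only if'' direction of Theorem \ref{thm:TroVa} with the roles arranged so that the relatively connected space is the source. So I would invoke: $\R$ is relatively connected, hence every quasisymmetric $F: \R \to Y$ is $(C,\a)$-power quasisymmetric (Theorem \ref{thm:TroVa}), and therefore $F|_E = f$ is $(C,\a)$-power quasisymmetric — contradiction.

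I expect the main (minor) obstacle to be bookkeeping rather than mathematics: making sure the monotonicity normalization of $f$ is legitimate, and making sure that ``power quasisymmetric'' is correctly seen to pass to restrictions and to be forced on maps out of connected (hence relatively connected) spaces regardless of the target $Y$. No estimate is needed beyond citing Theorem \ref{thm:TroVa}; the whole corollary is a two-line deduction once the direction of the equivalence is correctly oriented.
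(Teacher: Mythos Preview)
Your overall strategy matches the paper's exactly: produce a quasisymmetric $f:E\to\R$ that is not power quasisymmetric, then observe that any quasisymmetric extension $F:\R\to Y$ would be power quasisymmetric (since $\R$ is relatively connected, Theorem~\ref{thm:TroVa} applies), and power quasisymmetry passes to restrictions. That part is fine and is precisely what the paper does.

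The gap is in your monotonicity step. The assertion that ``any $\eta$-quasisymmetric $g:E\to\R$ is either increasing or decreasing'' is false for general $E\subset\R$. Monotonicity of quasisymmetric embeddings into $\R$ is a consequence of \emph{connectedness} of the domain (a homeomorphism of an interval onto its image in $\R$ is monotone); for a disconnected $E$ there is no such constraint. For instance, on $E=\{0,1,2\}$ the map $0\mapsto 0$, $1\mapsto 2$, $2\mapsto 1$ is bi-Lipschitz, hence quasisymmetric, and not monotone. Composing with $x\mapsto -x$ does not help here. The paper does not claim monotonicity is automatic; instead it cites the explicit construction in \cite[Theorem~6.6]{TroVa} and notes that \emph{that particular} map is increasing by inspection of its definition. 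You should do the same: invoke the concrete map built in \cite{TroVa} rather than an abstract existence statement, and observe from its construction that it is monotone.
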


\begin{proof}
By \cite[Theorem 6.6]{TroVa}, there exists a quasisymmetric mapping $f: E\to \R$ that is not power quasisymmetric. A close inspection in its proof reveals, moreover, that the mapping $f$ is increasing. Let now $Y$ be a metric space 
containing the Euclidean line $\R$. If there was a quasisymmetric extension $F \colon \R \to Y$, then, by Theorem \ref{thm:TroVa}, $F$ would be power quasisymmetric. Thus, $f$ would be power quasisymmetric which is a contradiction.
\end{proof}

\subsection{Relative distance}

Let $E,F$ be two compact sets in a metric space $(X,d)$ both of which contain at least two points. The \emph{relative distance} of $E$ and $F$ is defined to be the quantity
\[ d^*(E,F) = \frac{\dist(E,F)}{\diam{E}\wedge\diam{F}}\]
where $\dist(E,F) = \min\{d(x,y):x\in E, y\in F\}$. 

Note that if $E,F \subset \R^n$ and $f : \R^n \to \R^n$ is a similarity then $d^*(f(E),f(F)) = d^*(E,F)$. In general, if $f: E\cup F \to Y$ is $\eta$-quasisymmetric, then
\begin{equation}\label{eq:relQS}
\frac12 \phi(d^*(E,F)) \leq d^*(f(E),f(F)) \leq \eta(2d^*(E,F))
\end{equation}
where $\phi(t)=(\eta(t^{-1}))^{-1}$; see for example \cite[p. 532]{Tyson}.

 
The following remark ties the notions of uniform perfectness in $\R$ and relative distance of sets in $\R$. 

\begin{remark}\label{rem:rel}
A closed set $E\subset \R$ is $c$-uniformly perfect for some $c\geq 1$ if and only if there exists $C>0$ such that for all bounded components $I,J$ of $\R\setminus E$, $d^*(I,J) \geq C$. The constants $c$ and $C$ are quantitatively 
related.
\end{remark}

\section{Quasisymmetric extension on $\R$}\label{sec:ext}
Suppose that $E \subset \R$ is relatively connected and $f:E \to \R^n$ is quasisymmetric. If $E$ is a singleton then trivially $f$ admits a quasisymmetric extension. Moreover, since quasisymmetric functions have a quasisymmetric 
extension to the closure of their domains, we may assume that $E$ is closed. 

In Section \ref{sec:reduc} we construct a quasisymmetric extension $f_0 : E_0 \to \R^m$ of $f$, where $E \subset E_0 \subset \R$ is a uniformly perfect set with no lower or upper bound and $m$ is either $n$ or $n+1$. In Section 
\ref{sec:bridges}, for some $n_0\in\N$ depending only on $M$ and $\eta$, we construct a homeomorphic extension $F_0 : \R \to \R^{n+n_0}$ of $f_0$. Finally, in Section \ref{sec:reflect} we construct a quasisymmetric extension 
$F:\R\to F_0(\R) \subset \R^{n+n_0}$ of $f_0$.

For the rest, $\textbf{0}$ denotes the origin of $\R^n$ and, for each $i=1,\dots,n$, $\be_i$ denotes the vector in $\R^n$ whose $i$-th coordinate is $1$ and the rest are $0$.

\subsection{Two preliminary extensions}\label{sec:reduc}
Throughout this section we assume that $E$ is an $M$-relatively connected closed set and $f$ is an $\eta$-quasisymmetric embedding of $E$ into $\R^n$ with $\eta = C(t^{\a}\vee t^{1/\a})$.

Suppose that $E$ is bounded from above or bounded from below. Then one of the following cases applies. 

\emph{Case 1}. Suppose that $E$ has a lower bound but no upper bound. Applying suitable similarities we may assume that $1\in E$, $\min{E} = 0$ and $f(0)=\textbf{0}$. Let $C_0 = \max\{2,1/\eta^{-1}(1/2)\}$. Set $a_0=0$
and, by relative connectedness, there exists a sequence $\{a_k\}_{k\in\N}\subset E$ with $a_1 = 1$ and $a_k/a_{k-1} \in [C_0,MC_0]$. Set $\tilde{E} = E \cup\{-a_k\}_{k\in\N}$ and $\tilde{f} : \tilde{E} \to \R^{n+1}$ with 
$\tilde{f}|E = f\times\{0\}$ and $\tilde{f}(-a_k) = \{\textbf{0}\}\times \{-|f(a_k)|\}$. 

\emph{Case 2}. Suppose that $E$ has an upper bound but no lower bound. Applying suitable similarities we may assume that $1\in E$, $\max{E} = 0$ and $f(0)=\textbf{0}$. We define $\tilde{E}$ and $\tilde{f}$ similarly to 
Case 1.

\emph{Case 3}. Suppose that $E$ is bounded. Applying suitable similarities, we may assume that $\min E = 0$, $\max E =1$, $\max_{x\in E}|f(x)| =1$ and $\diam{f(E)} =1$. For any $k\in \Z$ define 
$\tilde{E}_k = \{2k+ x \colon x \in E\}$, $\tilde{E} = \bigcup_{k\in\Z}\tilde{E}_k$ and $\tilde{f} : \tilde{E} \to \R^n$ with $\tilde{f}(2k+x) = 2k\be_1+f(x)$. A similar extension in the case $n=1$ has been considered by Lehto and 
Virtanen in \cite[II.7.2]{LeVi}.

\begin{lem}\label{lem:unbound}
In each case, $\tilde{E}$ is an $\tilde{M}$-relatively connected closed set and $\tilde{f}$ is $\tilde{\eta}$-quasisymmetric with $\tilde{M}$ and $\tilde{\eta}$ depending only $M$ and $\eta$.
\end{lem}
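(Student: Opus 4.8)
The plan is to verify each of the three cases separately, since they are structurally similar. In all cases the closedness of $\tilde E$ is immediate: in Cases 1 and 2, $\tilde E = E \cup \{-a_k\}_{k\in\N}$ and the added points accumulate only at $0 \in E$ (since $a_k \to \infty$), so $\tilde E$ is closed; in Case 3, $\tilde E = \bigcup_k \tilde E_k$ is a locally finite union of closed sets whose translates march off to infinity, hence closed.

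For relative connectedness, I would argue by cases on the location of a ball $\overline B(x,r)$. When $x$ and the relevant scale $r$ both lie "inside" one of the original pieces (for Case 3, inside a single $\tilde E_k$; for Cases 1 and 2, inside $E$ or inside the sequence $\{-a_k\}$), the $M$-relative connectedness of $E$ — together with the fact that consecutive ratios $a_k/a_{k-1}$ are pinched in $[C_0,MC_0]$, which makes $\{-a_k\}$ relatively connected with a constant depending only on $M,C_0$ — gives what we want. The remaining case is when $\overline B(x,r)$ straddles the gap between two pieces: here $r$ is comparable to the distances between the pieces, and since those distances ($2$ in Case 3; the controlled ratios in Cases 1,2) are comparable to the diameters of the pieces at the relevant scale, one can always find a point of $\tilde E$ in the shell $\overline B(x,r)\setminus B(x,r/\tilde M)$ for a suitable $\tilde M = \tilde M(M,\eta)$. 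This is essentially a finite case-check with uniformly bounded ratios.

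For the quasisymmetry of $\tilde f$ I would use the weak-quasisymmetry criterion or, more directly, the relative-distance estimate \eqref{eq:relQS} together with Lemma~\ref{lem:jointqs}-type gluing. The key point is that $\tilde f$ is built by isometrically placing copies of $f(E)$ (in Case 3, translates by $2k\be_1$; in Cases 1,2, a reflected ray of points at the distances $|f(a_k)|$), so on each individual piece $\tilde f$ is a similarity-composite of $f$, hence $\eta$-quasisymmetric. To control $\tilde f$ across pieces one checks the three-point condition $\frac{|\tilde f(x)-\tilde f(a)|}{|\tilde f(x)-\tilde f(b)|} \lesssim \eta_0\bigl(\frac{|x-a|}{|x-b|}\bigr)$ when $x,a,b$ are not all in one piece. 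Here one uses that the "inter-piece" geometry on the source side (gaps of size $2$ between unit-diameter blocks in Case 3; ratios in $[C_0,MC_0]$ in Cases 1,2) is matched on the target side by the construction: $|\tilde f(2k+x)-\tilde f(2j+y)|$ is comparable to $|k-j| + |f(x)-f(y)|$ via orthogonality of the $\be_1$-direction and the boundedness of $\diam f(E)=1$, and similarly the reflected points in Cases 1,2 sit on a line orthogonal to the image of $E$ at distances that are power-quasisymmetrically controlled because $f$ is $(C,\a)$-quasisymmetric and the $a_k$ grow geometrically. Lemma~\ref{lem:Holder} (bi-Hölder bounds) is the tool that turns "geometric growth of $a_k$" into "geometric growth of $|f(a_k)|$", which is what makes the reflected ray relatively connected and its embedding quasisymmetric.

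The main obstacle I expect is the cross-piece quasisymmetry estimate in Case~3 (and the analogous reflected-ray estimate in Cases~1--2): one must check \eqref{eq:weakQS} (or the three-point inequality) uniformly over all configurations where the three points span two or more blocks, and the worst configurations are those where two of the points are close together near the seam between blocks while the third is far — there the comparability $|\tilde f(p)-\tilde f(q)| \asymp \operatorname{dist}(\text{blocks of }p,q) + |f(p')-f(q')|$ must be used carefully, invoking the orthogonality of the translation direction and the normalization $\max|f| = \diam f(E) = 1$ to keep all implied constants dependent only on $M$ and $\eta$. Once the comparability is established, the final quasisymmetry bound follows by feeding it into Lemma~\ref{lem:ext} (or by a direct computation mirroring the proof of Lemma~\ref{lem:jointqs}), yielding $\tilde\eta$ depending only on $M$ and $\eta$.
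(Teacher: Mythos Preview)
Your approach is essentially the paper's: case-split on the location of $x$ and the scale $r$ for relative connectedness, and verify the three-point inequality by cases on which piece each of the three points lies in. Two small corrections: in Case~3 the translates $2k\be_1$ are \emph{not} orthogonal to $f(E)\subset\R^n$ (the $\be_1$-direction lives inside $\R^n$), so the comparability $|\tilde f(2k{+}x)-\tilde f(2j{+}y)|\asymp |k-j|$ for $k\neq j$ comes from the triangle inequality together with the normalization $|f|\le 1$, not from orthogonality; and Lemma~\ref{lem:Holder} is actually invoked in Case~3 (for the mixed configurations $n_1=n_2\neq n_3$ and its symmetric), whereas in Cases~1--2 the paper handles the cross-piece estimates by direct quasisymmetry inequalities rather than bi-H\"older bounds. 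Also note that Lemmas~\ref{lem:ext} and~\ref{lem:jointqs} are stated for intervals and do not apply to $\tilde E$ directly; the paper (and, ultimately, you) simply checks the three-point inequality by hand.
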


\begin{proof}
We only prove the lemma for Case 1 and Case 3; the proof for Case 2 is similar to that of Case 1.

\emph{Case 1}. Note first that $\{-a_n\}_{n\in\N}$ is $M_1$-relatively connected for some $M_1$ depending only on $M$ and $\eta$. Let $x\in \tilde{E}$ and $r>0$ such that $\overline{B}(x,r)\cap\tilde{E} \neq \{x\}$. If $x\in E$ then 
$\overline{B}(x,r)\cap E \neq \{x\}$ and $(\overline{B}(x,r)\setminus B(x,r/M))\cap\tilde{E} \neq \emptyset$. If $x=-a_n$, $n\geq 1$, then $\overline{B}(x,r)\cap \{-a_n\}_{n\in\N} \neq \{x\}$ and 
$(\overline{B}(x,r)\setminus B(x,r/M_1))\cap\tilde{E} \neq \emptyset$. Thus, $\tilde{E}$ is $(M\vee M_1)$-relatively connected.

For the quasisymmetry of $\tilde{f}$, note first that $\tilde{f}$ restricted on $\{-a_n\}_{n\in\N}$ is $C\eta$-quasisymmetric for some $C>1$ depending only on $\eta$. Let $x,y,z \in \tilde{E}$. If all three of them are in $E$ or in 
$\tilde{E}\setminus E$ then the quasisymmetry of $\tilde{f}$ follows trivially.

Assume first that $x,z \in E$ and $y = -a_n$ for some $a_n \in E$. Then, $|\tilde{f}(y)| = |f(a_n)|$, $|y| = |a_n|$ and
\begin{align*}
\frac{|\tilde{f}(x)-\tilde{f}(y)|}{|\tilde{f}(x)-\tilde{f}(z)|} &\leq 2\frac{|f(x)|}{|f(x)-f(z)|} + \frac{|f(x)-f(a_n)|}{|f(x)-f(z)|} \\
                                &\leq 2\eta \left ( \frac{|x|}{|x-z|} \right ) +  \eta\left ( \frac{|x-a_n|}{|x-z|}\right ) \leq 3\eta\left (\frac{|x-y|}{|x-z|}\right ). 
\end{align*}
We work similarly if $x,z \in \{-a_n\}_{n\in\N}$ and $y\in E$.

Assume now that $z\in E$ and $y,x\not\in E$. Let $n_0$ be the smallest integer $n$ such that $a_{n}\geq z$ and set $\overline{z} = -a_{n_0}$. Then, there exist constants $C_1,C_2>1$ depending only on $M$, $C$ and $\a$ such that
\begin{align*}
\frac{|\tilde{f}(x)-\tilde{f}(y)|}{|\tilde{f}(x)-\tilde{f}(z)|} &\leq C_1\min \left \{ \frac{|\tilde{f}(x)-\tilde{f}(y)|}{|\tilde{f}(x)-\tilde{f}(\overline{z})|}, \frac{|\tilde{f}(x)-\tilde{f}(y)|}{|\tilde{f}(x)|} \right \} \\
                                &\leq C_1\min \left \{\eta \left ( \frac{|x-y|}{|x-\overline{z}|} \right ), \eta \left ( \frac{|x-y|}{|x|}\right ) \right \} \leq C_2\eta\left (\frac{|x-y|}{|x-z|}\right ).
\end{align*}
We work similarly if $z\in \{-a_n\}_{n\in\N}$ and $x,y \in E$.

\emph{Case 3}. We first show that $\tilde{E}$ is $M'$-relatively connected with $M'=8M$. Let $x\in \tilde{E}$ and $r>0$ such that $\overline{B}(x,r) \cap \tilde{E} \neq \{x\}$. Since $\tilde{E}$ is unbounded,
 $\tilde{E}\setminus \overline{B}(x,r) \neq \emptyset$. By periodicity of $\tilde{E}$, we may assume that $x\in E$. If $r\geq 4$ then $\overline{B}(x,r)\setminus B(x,r/2)$ contains an interval of length $2$ and therefore it contains 
points of $\tilde{E}$. Suppose now that $r< 4$. Then, $\tilde{E}\cap B(x,r/8) \subset E$ and $E\setminus \overline{B}(x,r/8)\neq\emptyset$. If $E \cap \overline{B}(x,r/8) = \{x\}$ then $\tilde{E} \cap \overline{B}(x,r/8) = \{x\}$ 
and the relative connectedness is satisfied with $M'=8$. If $E \cap \overline{B}(x,r/8) \neq \{x\}$ then, by the relative connectedness of $E$, $E\cap(\overline{B}(x,r)\setminus B(x,r/(8M))) \neq \emptyset$.

We show now the second claim. Recall that by Theorem \ref{thm:TroVa} $f$ is power quasisymmetric. Let $y,x,z \in \tilde{E}$ and assume $y\in \tilde{E}_{n_1}$, $x\in \tilde{E}_{n_2}$ and $z\in \tilde{E}_{n_3}$ with 
$n_1, n_2, n_3 \in \Z$. If $n_1=n_2=n_3$ the claim follows trivially. If $n_1, n_2, n_3$ are all different then
\[\frac{|\tilde{f}(x)-\tilde{f}(y)|}{|\tilde{f}(x)-\tilde{f}(z)|} \leq \frac{2|n_2-n_1| +1}{2|n_3-n_2|-1} \leq 9\frac{|n_2-n_1|}{|n_3-n_2| + 2} \leq 9\frac{|x-y|}{|x-z|}.\]
If $n_1=n_2 \neq n_3$ then the second inequality in Lemma \ref{lem:Holder} gives
\[\frac{|\tilde{f}(x)-\tilde{f}(y)|}{|\tilde{f}(x)-\tilde{f}(z)|} \leq C'\frac{|x-y|^{\a}}{|n_3-n_2|} \leq 3C'\left(\frac{|x-y|}{|x-z|}\right )^{\a}.\]
The remaining case $n_1\neq n_2 = n_3$ is treated similarly using the first inequality of Lemma \ref{lem:Holder}.
\end{proof}

By Lemma \ref{lem:unbound} we may assume for the rest that $E$ is a relatively connected closed set with no upper or lower bound. Hence, all components of $\R\setminus E$ are bounded open intervals.

For the second extension, we treat the case when $E$ has isolated points. For each isolated point $x\in E$ let $\pi(x) \in E$ be the closest point of $E\setminus\{x\}$ to $x$ and define
\[ E_x = \overline{B}(x,|x-\pi(x)|/10)\]
and $f_x : E_x \to \R^n$ with
\[ f_x(y) = f(x) + \frac{1}{\eta(1)}\frac{|f(x)-f(\pi(x))|}{|x-\pi(x)|}(y-x)\be_1.\]
If $x$ is an accumulation point of $E$, then set $E_x = \{x\}$ and $f_x : \{x\} \to \R$ with $f_x(x) = f(x)$. Finally, set $\hat{E} = \bigcup_{x\in E}E_x$ and $\hat{f} \colon \hat{E} \to \R$ with $\hat{f}|E_x = f_x$. Similar 
extensions also appear in a paper of Semmes \cite[Section 2]{Semmes3}.

\begin{remark}\label{rem:ratios}
Suppose that $x\in E$ is an isolated point. Then,
\[4 \leq d^*(E_x, \hat{E}\setminus E_x) \leq 5 \quad\text{ and } \quad 3 \leq d^*(\hat{f}(E_x), \hat{f}(\hat{E}\setminus E_x)) \leq 5\eta(1).\]
\end{remark}

The first claim of Remark \ref{rem:ratios} is clear. For the upper bound of the second claim note that $\dist(\hat{f}(E_x),\hat{f}(\hat{E}\setminus E_x)) \leq |f(x)-f(\pi(x))|\leq 5\eta(1)\diam{\hat{f}(E_x)}$. For the lower bound, 
take points $x'\in E_x$ and $y'\in \hat{E} \setminus E_x$ and assume that $y'\in E_{y}$. Then, 

\begin{equation}\label{eq:ratios}
\frac{|\hat{f}(x')-\hat{f}(x)|}{|\hat{f}(x')-\hat{f}(y')|} 
\leq \frac{1}{10\eta(1)}\eta\left( \frac{|x-\pi(x)|}{|x-y|}\right )\frac{|\hat{f}(x)-\hat{f}(y)|}{\frac45 |\hat{f}(x)-\hat{f}(y)|} 
\leq \frac{1}{8}.
\end{equation}
Thus, if $x'$ is an endpoint of $E_x$, (\ref{eq:ratios}) yields $\dist(\hat{f}(x'),\hat{f}(\hat{E}\setminus E_x))\geq 4\diam{\hat{f}(E_x)}$. Hence, $\dist(\hat{f}(E_x),\hat{f}(\hat{E}\setminus E_x))\geq 3\diam{\hat{f}(E_x)}$ and the 
lower bound follows.

\begin{lem}\label{lem:isolated}
The set $\hat{E}$ is closed and $c$-uniformly perfect and $\hat{f}: \hat{E}\to\R^n$ is $\hat{\eta}$-quasisymmetric where $c$ depends only on $M$ and $\hat{\eta}$ depends only on $\eta$.
\end{lem}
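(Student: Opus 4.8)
The plan is to verify the three claims — $\hat E$ closed, $\hat E$ uniformly perfect, $\hat f$ quasisymmetric — in turn, after two preliminary observations. First, the intervals $E_x$, $x\in E$ isolated, are pairwise disjoint and meet $E$ only at their centers: since $\diam E_x=|x-\pi(x)|/5$ and $|x-\pi(x)|\le|x-y|$ whenever $x\ne y$ are isolated points, we get $\diam E_x+\diam E_y\le\tfrac25|x-y|<|x-y|$, so $E_x\cap E_y=\emptyset$; and $E\cap E_x=\{x\}$ because $B(x,|x-\pi(x)|)$ contains no point of $E$ other than $x$. Hence $\hat E$ is the disjoint union of the set $A$ of accumulation points of $E$ with the intervals $E_x$, so $\hat f$ is well defined, and $\hat E$ has no isolated points, each of its points lying in a nondegenerate $E_x$ or being an accumulation point of $E$, hence of $\hat E$. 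Second, $E$ and $f(E)$ are unbounded (a quasisymmetric image of an unbounded set is unbounded), so Remark \ref{rem:ratios} takes the convenient forms $\dist(E_x,\hat E\setminus E_x)\ge4\diam E_x$ and $\dist(\hat f(E_x),\hat f(\hat E\setminus E_x))\ge3\diam\hat f(E_x)$. For closedness, suppose $p_k\in\hat E$, $p_k\to p$. If $p\in E$ then $p\in E_p\subset\hat E$. If $p\notin E$, then $p$ lies in a bounded component $(a,b)$ of $\R\setminus E$, and for large $k$ the point $p_k$ lies in $(a,b)\cap E_{x_k}$ with $x_k$ isolated; since $a,b\in E\setminus\{x_k\}$ bound the radius of $E_{x_k}$ by $\tfrac1{10}\dist(x_k,\{a,b\})$, one checks $x_k\in\{a,b\}$, so along a subsequence $x_k$ is constant and $p\in E_{x_k}\subset\hat E$.

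Next I would show that $\hat E$ is $M''$-relatively connected with $M''$ depending only on $M$, and then invoke \cite[Theorem 4.13]{TroVa}: a relatively connected set with no isolated points is uniformly perfect, quantitatively. Given $p\in\hat E$ and $r>0$, I distinguish cases. If $p\in A$, then $\overline B(p,r)\cap E\ne\{p\}$ since $p$ is an accumulation point of $E$, and relative connectedness of $E$ furnishes $q\in E\subset\hat E$ with $r/M\le|p-q|\le r$. If $p$ lies in some $E_y=\overline B(y,r_y)$, $r_y=|y-\pi(y)|/10$, I would split into: $r\le 2r_y$, where one of $p\pm r/2$ lies in $E_y$; $2r_y<r<11r_y$, where the endpoint of $E_y$ farthest from $p$ is at distance in $[r_y,2r_y]$ from $p$; and $r\ge11r_y$, where $s:=r-r_y\ge|y-\pi(y)|$ so that relative connectedness of $E$ at $y$ with radius $s$ gives $q\in E$ with $s/M\le|y-q|\le s$, and the bound $|p-q|\ge|y-q|-r_y\ge\tfrac{9}{10}|y-q|\ge\tfrac{9r}{11M}$ follows since every point of $E\setminus\{y\}$ is at distance at least $10r_y$ from $y$. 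This gives $M''=\max\{11,11M/9\}$, hence uniform perfectness with constant depending only on $M$.

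For the quasisymmetry of $\hat f$ I would use: that $f$ is power quasisymmetric (Theorem \ref{thm:TroVa}), say $\eta(t)=C(t^\a\vee t^{1/\a})$ with $C=\eta(1)$; that $\hat f$ is a homothety on each $E_x$ with ratio $\lambda_x=|f(x)-f(\pi(x))|/(\eta(1)|x-\pi(x)|)$, so that $\diam E_x=\tfrac15|x-\pi(x)|$ and $\diam\hat f(E_x)=\tfrac1{5\eta(1)}|f(x)-f(\pi(x))|$; and the distance estimates above, which by the triangle inequality imply that for $p\in E_x$ and $q\in\hat E\setminus E_x$ the quantities $|p-q|$ and $|x-q^\flat|$ are comparable, as are $|\hat f(p)-\hat f(q)|$ and $|f(x)-f(q^\flat)|$, with absolute constants (here $q^\flat\in E$ is the center of the interval containing $q$, or $q$ itself if $q\in A$). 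Given distinct $u,v,w\in\hat E$ with $u\ne w$, I would argue by cases on the coincidence pattern of $u^\flat,v^\flat,w^\flat$. If all three agree, $\hat f$ is a homothety on the relevant $E_x$ and the quasisymmetry inequality is an equality. If all three differ, the comparabilities reduce the inequality to the quasisymmetry of $f$ at $u^\flat,v^\flat,w^\flat$ after an absolute dilation of the argument and up to an absolute constant. The case $v^\flat=w^\flat\ne u^\flat$ is also immediate, since then all three pairwise distances among $u,v,w$ and all three among their images are mutually comparable.

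This leaves $u^\flat=v^\flat=x\ne w^\flat$ and $u^\flat=w^\flat=x\ne v^\flat$, the substantive cases. In the first, the comparabilities, the quasisymmetry of $f$ at $x,\pi(x),w^\flat$, and the homothety formula give
\[ \frac{|\hat f(u)-\hat f(v)|}{|\hat f(u)-\hat f(w)|}\ \le\ c\,\frac{|u-v|}{|x-\pi(x)|}\,\eta\!\left(\frac{|x-\pi(x)|}{|x-w^\flat|}\right) \]
for an absolute $c$, and it remains to bound the right-hand side by $\hat\eta(|u-v|/|u-w|)$. Two structural facts do this with $\hat\eta(t)=K(t^\a\vee t^{1/\a})$ for some $K=K(\eta)$: since $\pi(x)$ is the \emph{nearest} point of $E$ to $x$, the argument of $\eta$ is at most $1$, so $\eta$ is used only where $\eta(t)=Ct^\a$; and the prefactor $|u-v|/|x-\pi(x)|$ is at most $\tfrac15$, which is exactly enough to absorb the power weight once one substitutes $|u-v|/|x-\pi(x)|$ and $|x-\pi(x)|/|u-w|$ for the two variables. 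The second case runs the same way, except that now $|x-\pi(x)|/|x-v^\flat|\le1$ forces $\eta$ into its $t^{1/\a}$ branch, and the compensation comes instead from the fact that $|u-v|/|u-w|$ is correspondingly large. I expect this last bookkeeping to be the main obstacle: arranging that the geometric constraint on $\diam E_x$ and the nearest-point property of $\pi(x)$ cancel the power weight in the right direction; once the comparability estimates are established, every other configuration reduces either to an equality or to the quasisymmetry of $f$.
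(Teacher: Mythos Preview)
Your argument is correct and follows the same overall architecture as the paper: disjointness of the $E_x$, closedness, relative connectedness plus absence of isolated points to get uniform perfectness, and a case analysis on the coincidence pattern of the ``centers'' for quasisymmetry. The relative-connectedness case split and the closedness argument are essentially the paper's, fleshed out.

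The genuine difference is in the two substantive quasisymmetry cases, $u^\flat=v^\flat\neq w^\flat$ and $u^\flat=w^\flat\neq v^\flat$. The paper does not compute these directly; instead it verifies, via Remark~\ref{rem:ratios}, the hypotheses of \cite[Lemma~2.29]{Semmes3} (a general ``gluing'' lemma for quasisymmetric maps) applied to $A=E\setminus\{x'\}$ and $A^*=E\cup E_{x'}$, obtaining that $\hat f|(E\cup E_{x'})$ is quasisymmetric, and then finishes with the comparabilities from Remark~\ref{rem:ratios}. Your approach replaces this black box with an explicit power-quasisymmetry computation, exploiting that $|x-\pi(x)|/|x-w^\flat|\le 1$ puts $\eta$ on its $t^\alpha$ branch while $|u-v|/|x-\pi(x)|\le\tfrac15$ absorbs the leftover weight (and dually in the second case). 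This is more self-contained and makes the dependence of $\hat\eta$ on $\eta$ completely explicit; the paper's route is shorter but imports an external lemma.

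One small slip: in the case $v^\flat=w^\flat\neq u^\flat$ you assert that ``all three pairwise distances among $u,v,w$'' are mutually comparable, but $|v-w|$ can be arbitrarily small compared with $|u-v|$. This is harmless, since what you actually need---and what Remark~\ref{rem:ratios} gives---is only $|u-v|\simeq|u-w|$ and $|\hat f(u)-\hat f(v)|\simeq|\hat f(u)-\hat f(w)|$; just rephrase accordingly.
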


\begin{proof}
Clearly, $E_x\cap E_y = \emptyset$ for $x,y\in E$ with $x\neq y$. To see that $\hat{E}$ is closed, take $y \in \overline{\hat{E}}$. If $y\in \overline{\hat{E}} \setminus E$ then $y \in \overline{E_x}$ for some $x\in E$ and, thus, 
$y\in \hat{E}$.

Since $\hat{E}$ has no isolated points, we only need to show that $\hat{E}$ is $M'$-relatively connected for some $M'$ depending on $M$. Take $x\in \hat{E}$ and $r>0$. From the unboundedness of $\hat{E}$ and the fact that $\hat{E}$ 
has no isolated points, we have $\{x\} \subsetneq \overline{B}(x,r)\cap \hat{E} \subsetneq \hat{E}$. If $x\in E$ is not isolated in $E$, then 
\[\emptyset \neq E\cap(\overline{B}(x,r)\setminus B(x,r/M)) \subset \hat{E}\cap(\overline{B}(x,r)\setminus B(x,r/M)).\]
Suppose $x \in E_z$ for some isolated point $z$ in $E$. If $r > 2M\dist(z,E\setminus\{z\})$ then $\emptyset \neq (E\setminus\{z\})\cap \overline{B}(z,r/2) \subset \hat{E}\cap \overline{B}(x,r)$. Therefore,
\[\emptyset \neq E\cap(\overline{B}(z,r/2)\setminus B(z,(2M)^{-1}r)) \subset \hat{E}\cap(\overline{B}(x,r)\setminus B(x,(4M)^{-1}r)).\] 
If $r \leq 2M\dist(z,E\setminus\{z\})$ then $(20M)^{-1}r \leq \frac{1}{10}\dist(z,E\setminus\{z\})$ and
\[\emptyset \neq E_z\cap(\overline{B}(x,r)\setminus B(x,(20M)^{-1}r)) \subset \hat{E}\cap(\overline{B}(x,r)\setminus B(x,(20M)^{-1}r)).\]

It remains to show that $\hat{f}$ is quasisymmetric; then by Theorem \ref{thm:TroVa} $\hat{f}$ will be power quasisymmetric. Let $x,y,z \in \hat{E}$ be three distinct points with $x\in E_{x'}$, $y\in E_{y'}$ and $z\in E_{z'}$ for 
some $x',y',z' \in E$. If $x'=y'=z'$ then $x,y,z$ are in an interval where $\hat{f}$ is a similarity.

If $x'\neq z'$ and $x' = y'$ then, by Remark \ref{rem:ratios}, the prerequisites of Lemma 2.29 in \cite{Semmes3} are satisfied for $A = E\setminus \{x'\}$, $A^* = E \cup E_{x'}$ and $H=\hat{f}|A^*$ and $\hat{f}|E\cup E_{x'}$ is 
$\eta'$-quasisymmetric for some $\eta'$ depending only on $\eta$. Hence,
\[ \frac{|\hat{f}(x)-\hat{f}(y)|}{|\hat{f}(x)-\hat{f}(z)|} \leq C_1\frac{|\hat{f}(x)-\hat{f}(y)|}{|\hat{f}(x)-\hat{f}(z')|} \leq C_1\eta'\left (\frac{|x-y|}{|x-z'|} \right ) \leq C_1 \eta'\left (C_2\frac{|x-y|}{|x-z|} \right )\]
for some $C_1,C_2>1$ depending only on $\eta$. Similarly for $x' = z' \neq y'$. If $x',y',z'$ are distinct then by Remark \ref{rem:ratios},
\begin{align*}
\frac{|\hat{f}(x)-\hat{f}(y)|}{|\hat{f}(x)-\hat{f}(z)|} &\leq C_3\frac{|\hat{f}(x')-\hat{f}(y')|}{|\hat{f}(x')-\hat{f}(z')|} \leq C_3 \eta\left (\frac{|x'-y'|}{|x'-z'|}\right ) \leq  C_3 \eta\left (C_4\frac{|x-y|}{|x-z|}\right )
\end{align*}
for some constants $C_3,C_4 >1$ depending only on $\eta$. Thus, $\hat{f}$ is quasisymmetric.
\end{proof}

\subsection{Bridges}\label{sec:bridges}
By Lemma \ref{lem:unbound} and Lemma \ref{lem:isolated}, we may assume that $E$ is a closed $c$-uniformly perfect set such that every component of $\R \setminus E$ is a bounded open interval, and $f: E \to \R^n$ is an 
$\eta$-quasisymmetric embedding.

In this section, for each component $I$ of $\R\setminus E$, we construct a path in a higher dimensional space $\R^{N}$, $N\geq n$, connecting the images of the endpoints of $I$. The union of these paths along with $f(E)$ gives a 
homeomorphic extension $F:\R \to \R^{N}$.

For two points $x,y \in \R^n \subset \R^k$ let $T_k(x,y)$ be the equilateral triangle which contains the line segment $[x,y]$ and lies on the $2$-dimensional plane defined by the points $x$, $y$ and $\be_{k}$. The \emph{bridge} of 
$x$ and $y$ in dimension $k$, denoted by $\cB_k(x,y)$, is the closure of $T_{k}(a,b)\setminus [x,y]$. 

\begin{remark}\label{rem:distbridges}
If $z,a,b\in \R^n$ with $|z-a| \leq |z-b|$ then, for all $x\in \cB_k(a,b)$, $|z-x| \geq C^{-1}(|z-a| + |x-a|)$ for some universal $C>1$.
\end{remark}

\begin{remark}\label{rem:bridgeseq}
Each bridge $\cB_k(x,y)$ is $4$-bi-Lipschitz equivalent to a closed interval of $\R$ of length $|x-y|$.
\end{remark}

Using Remark \ref{rem:distbridges} and triangle inequality, it is easy to verify that the relative distance of two bridges $\cB_{k}(x_1,y_1)$ and $\cB_{m}(x_2,y_2)$, with $k\neq m$, is comparable to the relative distance of the sets 
$\{x_1,y_1\}$ and $\{x_2,y_2\}$.

\begin{remark}\label{rem:relbridges}
Let $n,m_1,m_2 \in \N$ with $n< m_1 \leq m_2$ and let $x_1,y_1,x_2,y_2 \in \R^n$. There exists a universal $C_1>0$ such that 
\[ d^*(\cB_{m_2}(x_1,y_1),\cB_{m_1}(x_2,y_2)) \leq C_1 d^*(\{x_1,y_1\},\{x_2,y_2\}).\]
On the other hand, there exist universal constants $d_0>0$ and $C_2>0$ such that $d^*(\{x_1,y_1\},\{x_2,y_2\}) \geq d_0$ implies 
\[ d^*(\{x_1,y_1\},\{x_2,y_2\}) \leq C_2 d^*(\cB_{m_2}(x_1,y_1),\cB_{m_1}(x_2,y_2)). \]
\end{remark}

For each component $I$ of $\R\setminus E$ we denote by $a_I,b_I$ the endpoints of $I$ with $a_I<b_I$ and by $m_I$ the center of $I$. We also write $\cB_k(I) = \cB_k(f(a_I),f(b_I))$ where $k>n$. In general, two bridges $\cB_k(I)$ and 
$\cB_k(I')$, with $I\neq I'$, may intersect. Therefore, more dimensions are needed to make sure that such an intersection will never happen. The next lemma allows us to use only a finite amount of dimensions for this purpose.

\begin{lem}\label{lem:dim}
Let $d>0$. If $I_1,\dots,I_k$ are mutually disjoint closed intervals in $\R$ with $d^*(I_i,I_j) \leq d$ for all $i,j = 1,\dots,k$, $i\neq j$, then $k \leq 2d+3$.
\end{lem}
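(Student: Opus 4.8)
The plan is to order the intervals along the line and count how many can cluster near a fixed one of smallest diameter. First I would relabel so that $I_1 = (p, q)$ has the smallest length among all the $I_i$, and set $\ell = |I_1| = \diam I_1 > 0$. For any other interval $I_j$, the hypothesis $d^*(I_1, I_j) \le d$ together with $\diam I_1 \wedge \diam I_j = |I_1| = \ell$ (since $I_1$ is shortest) gives $\dist(I_1, I_j) \le d\,\ell$. Hence every $I_j$ lies within distance $d\ell$ of the closed interval $\overline{I_1} = [p,q]$, i.e. $I_j \subset [p - d\ell,\ q + d\ell]$ for all $j = 1,\dots,k$. This confines all $k$ intervals to a bounded window $W$ of length $|W| = \ell + 2d\ell = (2d+1)\ell$.

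Next I would use disjointness plus a lower bound on the lengths of the other intervals to get a packing bound. The $I_j$ for $j \ge 2$ need not individually be long, but I only need a lower bound on total measure in a cruder way: the $I_j$ are pairwise disjoint open subintervals of $W$, and each has length $\ge \ell$ (again because $I_1$ is the shortest). Therefore $\sum_{j=1}^{k} |I_j| \le |W|$, which gives $k\,\ell \le (2d+1)\ell$, i.e. $k \le 2d+1$. This is in fact slightly stronger than the claimed $k \le 2d+3$, so the stated bound follows immediately with room to spare; I would present it with the weaker constant to match how the lemma is invoked, noting the $+3$ gives slack for any boundary/closed-versus-open bookkeeping.

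The only point needing a little care — and the step I'd flag as the main obstacle, though it is minor — is the definition of $d^*$ for possibly-degenerate or non-closed intervals: $d^*$ in the paper is defined for compact sets with at least two points, and here the $I_i$ are open intervals that are mutually disjoint, hence nondegenerate (so $\diam I_i > 0$) and we should read $d^*(I_i, I_j)$ via their closures $\overline{I_i}, \overline{I_j}$, which changes neither diameters nor distances. Once that is fixed, the argument is the one-paragraph measure-packing computation above: confine everything to a window of length $(2d+1)\ell$ using the relative-distance hypothesis and the minimality of $\ell$, then compare total length to window length using disjointness and the minimality of $\ell$ again. No deeper idea is required, and the constant $2d+3$ is comfortably attained.
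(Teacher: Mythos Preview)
There is a genuine gap in the packing step. From $\dist(I_1,I_j)\le d\ell$ you conclude that $I_j\subset[p-d\ell,\,q+d\ell]$, but this does not follow: a bound on $\dist(I_1,I_j)$ only pins down the \emph{nearest} point of $I_j$, not the whole interval. For instance, with $d=1$, $I_1=[0,1]$ (so $\ell=1$), and $I_2=[\tfrac32,100]$, we have $\dist(I_1,I_2)=\tfrac12\le 1$ while $I_2\not\subset[-1,2]$. In particular $|I_1|+|I_2|\gg|W|$, so the inequality $\sum_j|I_j|\le|W|$ fails and your derivation of $k\le 2d+1$ collapses. This is exactly the place where the extra ``$+2$'' in the constant $2d+3$ comes from, so your remark that the slack is cosmetic is mistaken.

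The fix is short and keeps your strategy intact. Every $I_j$ meets $W=[p-d\ell,q+d\ell]$ (its nearest endpoint to $I_1$ lies there), so an $I_j$ not contained in $W$ must contain one of the two endpoints of $W$; by disjointness at most two intervals can do this. The remaining at least $k-2$ intervals are disjoint, each of length $\ge\ell$, and lie in $W$, whence $(k-2)\ell\le(2d+1)\ell$ and $k\le 2d+3$. With this correction your argument is complete and is genuinely different from the paper's: the paper normalizes so that the two \emph{extremal} intervals are at unit distance, locates a shortest interval strictly between them, and compares its diameter to its distance from one of the extremes. Your route via the globally shortest interval and a window around it is cleaner and avoids the case split in the paper, but it does need the ``two intervals may overhang the window'' observation to close.
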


\begin{proof}
We may assume that if $i\not\in \{1,k\}$, $x\in I_1$, $y\in I_i$ and $z\in I_k$ then $x<y<z$. Furthermore, 
applying a similarity we may assume that $\dist(I_1,I_k) = 1$.
 
Since $d^*(I_1,I_k) \leq d$, we have $\diam{I_1}\wedge\diam{I_k} \geq d^{-1}$. Since the intervals $I_2,\dots,I_{k-1}$ are between $I_1$ and $I_k$, there exists at least one $j\in\{2,\dots,k-1\}$ such that 
$\diam{I_j} \leq \dist(I_1,I_k)/(k-2) = (k-2)^{-1}$. Thus, $\dist(I_1,I_j)\vee\dist(I_k,I_j) \geq \frac12(1-\frac{1}{k-2})$. If $\diam{I_j}\geq d^{-1}$ then $k\leq d+2$. Otherwise,
\[ d \geq d^*(I_1,I_j)\vee d^*(I_k,I_j) 
  \geq \frac{\dist(I_1,I_j)\vee\dist(I_k,I_j)}{d^{-1}\wedge\diam{I_j}}
  \geq \frac{1}{2}(k-3).\qedhere\]
\end{proof}

Let now $I_1,I_2,\dots$ be an enumeration of the components of $\R\setminus E$. 
By Remark \ref{rem:relbridges} and (\ref{eq:relQS}), there exists $C_0>0$ so that $d^*(\overline{I_i},\overline{I_j}) \geq C_0$ implies $d^*(\cB_{m}(I_i),\cB_{m}(I_j)) \geq 1$ for all $m>n$. By Lemma \ref{lem:dim}, there exists 
$n_0\in \N$, depending only on $c$ and $\eta$, such that if distinct $J_1,\dots,J_k \in \{I_1,I_2,\dots\}$ with $d^*(J_{i},J_j) < C_0$ for all $i\neq j$ then $k\leq n_0$. 
Set $N = n+ n_0+1$. Let $\cB_{n_{I_1}}(I_1)$ be the bridge with $n_{I_1} = n+1$. Suppose that $\cB_{n_{I_1}}(I_1), \dots, \cB_{n_{I_m}}(I_m)$ have been defined. Then, there exist at most $n_0$ indices $i_1,\dots,i_k$ in 
$\{1,\dots,m\}$ such that $d^*(I_{m+1},I_{i_j}) < C_0$. Pick $n_{I_{m+1}} \in \{n+1,\dots,N\}\setminus \{n_{I_{i_1}},\dots,n_{I_{i_{k}}}\}$ and define the bridge $\cB_{n_{I_{m+1}}}(I_{m+1})$. Inductively, for each component $I$ of 
$\R\setminus E$ we obtain a bridge $\cB_{n_I}(I)$ with $n_I \leq N$.

\begin{cor}\label{cor:bridgeimages}
Set $I' = \{f(a_I),f(b_I)\}$ for any component $I = (a_I,b_I)$ of $\R\setminus E$. Then, there exist $C>1$ depending only on $c$ and $\eta$ such that, for every two components $I,J$ of $\R\setminus E$ with $I\neq J$,
\[ (C)^{-1} d^*(I',J') \leq d^*(\cB_{n_I}(I), \cB_{n_J}(J)) \leq C d^*(I',J') \]
and $C^{-1}\dist(I',J') \leq \dist(\cB_{n_I}(I), \cB_{n_J}(J)) \leq C \dist(I',J')$.
\end{cor}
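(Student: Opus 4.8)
The plan is to compare each bridge directly with the two‑point set of images of the endpoints of the corresponding complementary interval. Write $I'=\{f(a_I),f(b_I)\}$ and $J'=\{f(a_J),f(b_J)\}$. I would first record, using Remark~\ref{rem:bridgeseq}, that $\tfrac14\diam I'\le\diam\cB_{n_I}(I)\le 4\diam I'$ (with $\diam I'=|f(a_I)-f(b_I)|$), and likewise for $J$; hence $\diam\cB_{n_I}(I)\wedge\diam\cB_{n_J}(J)$ and $\diam I'\wedge\diam J'$ are comparable up to an absolute factor, so the inequality for $d^*$ and the inequality for $\dist$ in the statement are equivalent up to an absolute factor and may be treated interchangeably. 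The upper bounds are then immediate: since $f(a_I),f(b_I)\in\cB_{n_I}(I)$ and $f(a_J),f(b_J)\in\cB_{n_J}(J)$, we have $\dist(\cB_{n_I}(I),\cB_{n_J}(J))\le\dist(I',J')$.

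For the lower bound I would distinguish two cases. If $n_I=n_J$, then the rule used to assign dimensions forces $d^*(\overline I,\overline J)\ge C_0$, and, by the choice of $C_0$ made just before the statement, this gives $d^*(\cB_{n_I}(I),\cB_{n_J}(J))\ge 1$. Combined with Remark~\ref{rem:relbridges} when $d^*(I',J')\ge d_0$, and with the trivial bound $d^*(\cB_{n_I}(I),\cB_{n_J}(J))\ge 1\ge d^*(I',J')/d_0$ when $d^*(I',J')<d_0$, this yields $d^*(\cB_{n_I}(I),\cB_{n_J}(J))\gtrsim d^*(I',J')$ with an absolute implied constant.

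The substantial case is $n_I\ne n_J$. Here $\be_{n_I}\perp\be_{n_J}$, both are orthogonal to $\R^n$, and $\cB_{n_I}(I)\subset\R^n+\R\be_{n_I}$, $\cB_{n_J}(J)\subset\R^n+\R\be_{n_J}$. For $p=\bar p+s\be_{n_I}\in\cB_{n_I}(I)$ and $q=\bar q+t\be_{n_J}\in\cB_{n_J}(J)$ with $\bar p,\bar q\in\R^n$, orthogonality gives $|p-q|^2=|\bar p-\bar q|^2+s^2+t^2$, so $|p-q|\ge|p-\bar q|$, $|p-q|\ge|\bar p-q|$ and $|p-q|\ge|\bar p-\bar q|$. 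I would apply Remark~\ref{rem:distbridges} to the point $\bar q\in\R^n$ and the bridge $\cB_{n_I}(f(a_I),f(b_I))$, with the endpoint nearer $\bar q$ in the role of $a$, to get $|p-\bar q|\ge C^{-1}\dist(\bar q,I')$ for the universal $C$ of that remark, and symmetrically $|\bar p-q|\ge C^{-1}\dist(\bar p,J')$. Then a short case analysis, comparing $\dist(\bar q,I')$ and $\dist(\bar p,J')$ with $\tfrac14\dist(I',J')$, finishes: if one of them is at least $\tfrac14\dist(I',J')$ the estimates above give $|p-q|\ge(4C)^{-1}\dist(I',J')$; otherwise, taking the nearest endpoint $v\in I'$ of $\bar q$ and $u\in J'$ of $\bar p$ and using $|u-v|\ge\dist(I',J')$, the triangle inequality gives $|\bar p-\bar q|\ge\tfrac12\dist(I',J')$. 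Taking the infimum over $p,q$ yields $\dist(\cB_{n_I}(I),\cB_{n_J}(J))\ge(4C)^{-1}\dist(I',J')$.

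I expect the case $n_I\ne n_J$ to be the only real obstacle: two bridges with distinct extra coordinates can still come close to each other near their bases, so the endpoint images alone carry no lower bound. The point is that the Euclidean distance in $\R^N$ dominates both the distance of the projections to $\R^n$ and the two independent ``heights'', and Remark~\ref{rem:distbridges} converts each base-plus-height bound back into a bound in terms of $I'$ and $J'$. By contrast, the case $n_I=n_J$ is soft, since the troublesome near-approaches there are precisely what the choice of $C_0$ in the construction rules out.
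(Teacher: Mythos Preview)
Your argument is correct and is essentially the detailed working-out of what the paper leaves implicit: the corollary has no separate proof in the paper, being stated as an immediate consequence of Remark~\ref{rem:relbridges}, Remark~\ref{rem:distbridges}, and the dimension-assignment rule. Your case split $n_I=n_J$ versus $n_I\ne n_J$ and, in the latter case, the orthogonality-plus-Remark~\ref{rem:distbridges} computation are exactly what the sentence preceding Remark~\ref{rem:relbridges} (``Using Remark~\ref{rem:distbridges} and triangle inequality, it is easy to verify\dots\ with $k\ne m$\dots'') is pointing to, and your handling of $n_I=n_J$ via the built-in guarantee $d^*(\cB_{n_I}(I),\cB_{n_J}(J))\ge 1$ is precisely the purpose of the choice of $C_0$ in the construction.
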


\subsection{Reflected sets and functions}\label{sec:reflect}
As before, we assume that $E$ is a closed $c$-uniformly perfect set such that every component of $\R \setminus E$ is a bounded open interval, and $f: E \to \R^N$ is an $\eta$-quasisymmetric embedding with $N=n+n_0+1$.

Recall from Section \ref{sec:bridges} that, given a a component $I = (a_I,b_I)$ of $\R\setminus E$, we denote by $m_{I}$ the midpoint of $I$. Moreover, we denote by $m_{\cB(I)}$ the point in $\cB_{n_I}(I)$ such that 
$\cB_{n_I}(I) = [f(a_I),m_{\cB(I)}] \cup [f(b_I),m_{\cB(I)}]$. Note that $[f(a_I),m_{\cB(I)}] \cap [f(b_I),m_{\cB(I)}] = \{m_{\cB(I)}\}$.

Let $I = (a_I,b_I)$ be a component of $\R\setminus E$. We define an increasing sequence in $E$ converging to $a_I$ as follows. Set $\d_0 = \min\{1/2,\eta^{-1}(1/2)\}$. Since $E$ is uniformly perfect, there exists $a_0\in E$, 
$a_0 < a_I$ with $|a_0 -a_I| \in [(2c)^{-1}|I|,2^{-1}|I|]$. Inductively, suppose that $a_k$ has been defined. Since $E$ is uniformly perfect, there exists $a_{k+1}\in E \cap (a_k,a_I)$ such that
\[ \frac{\d_0}{c} \leq \frac{|a_{k+1}-a_I|}{|a_k-a_I|} \leq \d_0. \]
Let $a'_0 = m_I$ and for each $k\geq 1$ let $a'_k \in (a_I,m_I)$ with $a'_k = 2a_I - a_k$. Similarly we obtain sequences $\{b_k\}_{k\geq 0} \subset E$ and $\{b_k'\}_{k\geq 0}\subset [m_I,b_I]$ for the point $b_I$. In the following, 
two intervals $[a_{k+1}',a_k']$ and $[a_{k}',a_{k-1}']$ are called \emph{neighbor intervals}. Similarly, $[a_1',m_{I}]$ is a neighbor of $[m_{I},b_1']$ and for each $k\in\N$, $[b_{k-1}',b_k']$ is a neighbor of $[b_{k}',b_{k+1}']$.

We define now $f_I: \overline{I} \to \cB_{n_I}(I)$. Set $f_I(m_I) = m_{\cB(I)}$ and for each $k\geq 1$, define $f_I(a_k')\in[f(a_I),m_{\cB(I)}]$ and $f_I(b_k') \in[f(b_I),m_{\cB(I)}]$ by
\[ \frac{|f_I(a_k') - f(a_I)|}{|f(a_k) -f(a_I)|} = 1 = \frac{|f_I(b_k') - f(b_I)|}{|f(b_k) -f(b_I)|}.\]
On each interval $[a'_{k+1},a_k']$ or $[b_k',b'_{k+1}]$ we extend $f_I$ linearly. It follows from the choice of $\d_0$ that $f_I$ is a homeomorphism.

Suppose that $J_1, J_2 \subset I$ are neighbor intervals. Then, there exists constant $C>1$ depending only on $\eta$ and $c$ such that 
\begin{equation}\label{eq:const1}
C^{-1} \leq |J_1|/|J_2|<C \text{ and } C^{-1} \leq \diam{f_I(J_1)}/\diam{f_I(J_2)}<C.
\end{equation}
Thus, by Lemma \ref{lem:jointqs}, Remark \ref{rem:bridgeseq} and the linearity of $f_I$ on each $J_i$ the following remark can be easily verified.

\begin{remark}\label{rem:qs}
Suppose that $J_1,J_2,J_3 \subset I$ are consecutive neighbor intervals. Then, there exists $\eta_1$ depending only on $\eta$ and $c$ such that $f_I|(J_1\cup J_2 \cup J_3)$ is $\eta_1$-quasisymmetric.
\end{remark}

Note that $f_I|\{a_k'\}_{k\geq 0}$ is $\eta_2$-quasisymmetric for some $\eta_2$ depending only on $\eta$ and $c$. We show in the next lemma that $f_I$ is quasisymmetric.

\begin{lem}\label{lem:key}
Let $I$ be a component of $\R\setminus E$. There exists $\eta'$ depending only on $\eta$ and $c$ such that $f_I$ is $\eta'$-quasisymmetric.
\end{lem}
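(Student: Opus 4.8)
The plan is to verify the weak quasisymmetry condition for $f_I$ on the interval $\overline{I}$ and then invoke Lemma \ref{lem:ext} (since $\cB_{n_I}(I)$ is, by Remark \ref{rem:bridgeseq}, bi-Lipschitz equivalent to an interval of $\R$, so its image sits in a fixed-dimensional Euclidean space with controlled distortion — concretely we may compose with the $4$-bi-Lipschitz parametrization and work on an interval of $\R$). So I want to show: there exists $H \geq 1$ depending only on $\eta$ and $c$ such that for all $x,y,z \in \overline{I}$ with $|x-y| \leq |x-z|$ one has $|f_I(x) - f_I(y)| \leq H |f_I(x) - f_I(z)|$. By symmetry of the construction (reflecting the roles of $a_I$ and $b_I$ about $m_I$) and monotonicity of $f_I$, I may assume $y < x < z$ or $z < x < y$, and after possibly swapping the $a$-side and $b$-side I reduce to a handful of positional cases according to which of the ``blocks'' $[a_{k+1}',a_k']$, $[m_I]$, $[b_k',b_{k+1}']$ contain $x$, $y$, $z$.

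First I would dispose of the easy cases: if $x,y,z$ all lie in three consecutive neighbor intervals, Remark \ref{rem:qs} gives the bound directly with $\eta_1$. If $x,y,z$ all lie on one side, say all in $[a_I,m_I]$, then the key geometric fact is the comparison
\[
C^{-1} |f_I(a_k') - f(a_I)| \leq |f(a_k) - f(a_I)| \leq C |f_I(a_k') - f(a_I)|,
\]
which is an equality by definition, combined with the fact that consecutive blocks have comparable lengths and comparable images \eqref{eq:const1}. This lets me transfer the weak-quasisymmetry inequality from the sequence $\{a_k'\}$ (where $f_I$ is $\eta_2$-quasisymmetric) to arbitrary points by first ``snapping'' $y$ and $z$ to the nearest block endpoints $a_{j}', a_{k}'$, losing only a bounded factor because within each block $f_I$ is linear and the block is comparable to its neighbors. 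The point is that $|x - a_j'|$ and $|x-y|$ are comparable up to a factor depending only on $c$ and $\d_0$, and likewise on the image side, because $x$ and $y$ live in blocks whose sizes decay geometrically toward $a_I$; so if $y$ is much closer to $x$ than $z$, then $y$ and $x$ are within a bounded number of blocks of each other and I can again appeal to Remark \ref{rem:qs}, while if $y$ and $z$ are both far from $x$ I snap to endpoints and use $\eta_2$-quasisymmetry of $\{a_k'\}$.

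The genuinely new case, and the one I expect to be the main obstacle, is when $y$ and $z$ lie on opposite sides of $m_I$ (say $y$ on the $a$-side, $z$ on the $b$-side, with $x$ somewhere in between) — here the bridge ``turns the corner'' at $m_{\cB(I)}$, so Euclidean distances on the image are no longer comparable to arc-length/parameter distances along the two legs of the bridge, and one must be careful that $|f_I(x) - f_I(z)|$ is not artificially small because of the angle at the apex. To handle this I would use that the triangle $T_{n_I}(f(a_I), f(b_I))$ is \emph{equilateral}, so the angle at $m_{\cB(I)}$ is $60^\circ$ and the two legs have equal length $|f(a_I) - f(b_I)|$; hence for points $p$ on one leg and $q$ on the other, $|p - q|$ is comparable (with a universal constant, e.g. because $\sin 60^\circ$ is bounded below) to $\mathrm{dist}(p, m_{\cB(I)}) + \mathrm{dist}(q, m_{\cB(I)})$, which is exactly the kind of ``additive'' estimate appearing in Remark \ref{rem:distbridges}. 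With that in hand, the cross-apex case reduces to comparing $|x - m_I| + |m_I - y|$-type quantities on the domain against their images, and since $f_I$ restricted to each leg is controlled by $\eta_2$ (plus the block-comparability \eqref{eq:const1}), and $f_I(m_I) = m_{\cB(I)}$, the weak quasisymmetry inequality follows with $H$ depending only on $\eta$ and $c$. Assembling the finitely many cases and taking $H$ to be the maximum of the constants obtained, Lemma \ref{lem:ext} then yields that $f_I$ is $\eta'$-quasisymmetric with $\eta'$ depending only on $\eta$ and $c$ (the ambient dimension being the fixed number $\dim \cB_{n_I}(I) = 2$ after the bi-Lipschitz reduction, so it does not enter $\eta'$).
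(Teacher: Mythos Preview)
Your proposal is correct and follows essentially the same route as the paper: verify weak quasisymmetry via the block structure, using Remark \ref{rem:qs} locally, the $\eta_2$-quasisymmetry of $f_I|\{a_k'\}$ for the snapping argument, and the bi-Lipschitz reduction of Remark \ref{rem:bridgeseq}. The only organizational difference is that the paper handles your cross-apex case not through the $60^\circ$-angle estimate but by first establishing quasisymmetry separately on $[a_I,m_I]$, on $[a_1',b_1']$ (via Remark \ref{rem:qs}), and on $[m_I,b_I]$, and then gluing these three overlapping pieces with Lemma \ref{lem:jointqs} --- once you have composed with the $4$-bi-Lipschitz straightening of the bridge, image distances are already comparable to arc-lengths, so your angle computation becomes redundant and what remains is exactly the content of that gluing lemma.
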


\begin{proof}
By Remark \ref{rem:qs}, $f_I|[a_1',b_1']$ is quasisymmetric. We show that 
$f_I|[a_I,a_0']$ is quasisymmetric and similar arguments apply for $f_I|[b_0',b_I]$. Then, by Lemma \ref{lem:jointqs} and Remark \ref{rem:bridgeseq}, $f_I$ is $\eta'$-quasisymmetric with $\eta'$ depending only on $\eta$ and $c$.
Recall that $f_I|\{a_k'\}_{k\geq 0}$ is $\eta_2$-quasisymmetric with $\eta_2$ depending only on $\eta$ and $c$.

To show that $f_I|[a_I,a_0']$ is quasisymmetric, we apply Lemma \ref{lem:jointqs}. Let $x,y,z$ be in $[a_I,a_0']$, with $x$ being between $y$ and $z$, and $|x-y| \leq |x-z|$. Suppose $x\in[a_k',a_{k-1}']$. 

Assume first that $y<x<z$. If $z\geq a_{k-2}'$ then $|f_I(x)-f_I(y)| \leq |f_I(a_{k-1}')-f_I(a_I)|\leq \eta_2(2)|f_I(a_{k-1}')-f_I(a_{k-2}')| \leq \eta_2(2)|f_I(x)-f_I(z)|$. If $z\leq a_{k-2}'$ and $y\geq a_{k+1}'$ then the 
quasisymmetry follows from Remark \ref{rem:qs}. If $z\leq a_{k-2}'$ and $y\leq a_{k+1}'$ then $|x-z|\geq |x-y|\geq C^{-1}|a_{k-1}'-a_{k}'|$ and by Remark \ref{rem:qs},
$|f_I(x)-f_{I}(y)| \leq |f_I(a_{k-1}')-f_I(a_I)| \leq \eta_2(2)|f_I(a_k')-f_I(a_{k-1}')| \leq \eta_2(2)(|f_I(x)-f_I(a_k')| + |f_I(x)-f_I(a_{k-1}')|) \leq 2\eta_2(2)\eta_1(C)|f_I(x)-f_I(z)|$.


Assume now that $z<x<y$. Then, there exists $m_0\in \N$ depending only on $c$ and $\eta$ such that $y \leq a_{k-m}'$ for some $0\leq m \leq m_0$. If $z\geq a_{k+1}'$ then we obtain quasisymmetry by applying Lemma \ref{lem:jointqs} at 
most $m_0$ times. If $z\leq a_{k+1}'$, then $|f_I(x)-f_I(y)| \leq |f_I(a_k')-f_I(a_{k-m}')| \leq \eta_2(m_0C^{m_0})|f_I(a_k')-f_I(a_{k+1}')| \leq \eta_2(m_0C^{m_0})|f_I(x)-f_I(z)|$ where $C$ is as in (\ref{eq:const1}).
\end{proof}

\section{Proof of main results}\label{sec:proof}

We show Theorem \ref{thm:main2} in this section. The proof of Theorem \ref{thm:main} is given in Section \ref{sec:proof2} and is a minor modification of that of Theorem \ref{thm:main2}.

Let $N = n+n_0+1$ be as in Section \ref{sec:bridges}. Define $F: \R \to \R^N$ with $F|E = f$ and $F|I = f_I$ whenever $I$ is a component of $\R\setminus E$. 
We show in Section \ref{sec:proof1} that $F$ satisfies (\ref{eq:weakQS}) and then, Lemma \ref{lem:ext} concludes the proof of Theorem \ref{thm:main2}.

To limit the use of constants we write in the following $u\lesssim v$ (resp. $u \simeq v$) when the ratio $u/v$ is bounded above (resp. bounded above and below) by a positive constant depending at most on $\eta$ and $c$.

\subsection{A form of monotonicity}

For the proof of the quasisymmetry of $F$ we show first that $F$ satisfies the following form of monotonicity.

\begin{lem}\label{lem:mon}
Suppose that $x_1,x_2,x_3 \in \R$ with $x_1 < x_2<x_3$. Then, 
\[ |F(x_2)-F(x_1)|\vee|F(x_3)-F(x_2)| \lesssim |F(x_3)-F(x_1)|. \]
\end{lem}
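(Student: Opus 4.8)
The plan is to reduce the estimate to a finite number of ``local'' cases according to where the three points $x_1,x_2,x_3$ sit relative to the set $E$ and to the components of $\R\setminus E$ carrying the bridges. First I would set up the dichotomy: each $x_i$ either lies in $E$ or lies in some component $I_i$ of $\R\setminus E$, in which case $F(x_i)=f_{I_i}(x_i)$ lies on the bridge $\cB_{n_{I_i}}(I_i)$. If all three $x_i$ lie in the closure of a single component $I$, the claim is immediate from Lemma~\ref{lem:key}, since $f_I$ is $\eta'$-quasisymmetric and a quasisymmetric image of a monotone configuration satisfies exactly this bound (apply the definition with the middle point as the basepoint, or use monotonicity of $f_I$ together with Remark~\ref{rem:bridgeseq}). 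So the real content is when the points straddle $E$.

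Next I would reduce to the case $x_1,x_3\in E$. The point is that if $x_1$ lies in a component $I_1$, I can replace it by the endpoint $b_{I_1}\in E$ that lies between $x_1$ and $x_3$ (or by $a_{I_1}$ if the whole of $\overline{I_1}$ lies to the left of $x_2$): by Remark~\ref{rem:distbridges} applied on the bridge $\cB_{n_{I_1}}(I_1)$, we have $|F(x_3)-F(x_1)|\gtrsim |F(x_3)-f(b_{I_1})|+|F(x_1)-f(b_{I_1})|$, so $|F(x_3)-f(b_{I_1})|\lesssim |F(x_3)-F(x_1)|$ and also $\diam\cB_{n_{I_1}}(I_1)=|I_1|\cdot(\text{Lip const})\lesssim$ the same quantity; this lets us transfer the desired inequality from $b_{I_1}$ to $x_1$ at the cost of a universal constant. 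The same maneuver handles $x_3$, and a similar but easier observation handles $x_2$: if $x_2\in I_2$ with $I_2\neq I_1,I_3$, then $|F(x_2)-F(x_1)|\le |F(x_2)-f(b_{I_2})|+|f(b_{I_2})-F(x_1)|$ and both terms are controlled once we know the bound for the $E$-points $a_{I_2},b_{I_2}$ (here we use $\diam\cB_{n_{I_2}}(I_2)=|I_2|\cdot(\text{Lip const})$ and the fact that $I_2\subset(x_1,x_3)$ gives $|I_2|\lesssim |x_3-x_1|$, hence via quasisymmetry of $f$ on $E$ a bound on $\diam\cB_{n_{I_2}}(I_2)$). Thus it suffices to prove the lemma when $x_1,x_3\in E$ and $x_2\in E\cup I_2$ for a single component $I_2$.

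In that reduced situation, write $a=a_{I_2}$, $b=b_{I_2}$ (or $a=b=x_2$ if $x_2\in E$), so $x_1\le a\le x_2\le b\le x_3$ and all of $x_1,a,b,x_3\in E$. Since $f$ is $\eta$-quasisymmetric on $E$ and $x_1<a\le b<x_3$ in $\R$, the quantities $|f(a)-f(x_1)|$, $|f(b)-f(a)|$, $|f(x_3)-f(b)|$ are each $\lesssim |f(x_3)-f(x_1)|$ — this is precisely the Euclidean-line version of the monotonicity statement for $f$ itself, which follows from Lemma~\ref{lem:ext}/weak quasisymmetry, or directly: with $a$ as basepoint, $|f(a)-f(x_1)|\le\eta(|a-x_1|/|a-x_3|)|f(a)-f(x_3)|$ and $|f(a)-f(x_3)|\le |f(x_1)-f(x_3)|\cdot$(const) because $f$ maps the ordered triple $x_1,a,x_3$ to an ``almost ordered'' triple. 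Then $|F(x_2)-F(x_1)|\le |F(x_2)-f(a)|+|f(a)-f(x_1)|\le \diam\cB_{n_{I_2}}(I_2)+|f(a)-f(x_1)|$; by Remark~\ref{rem:bridgeseq} $\diam\cB_{n_{I_2}}(I_2)\simeq |f(a)-f(b)|\lesssim |f(x_3)-f(x_1)|=|F(x_3)-F(x_1)|$, and the other summand is $\lesssim|F(x_3)-F(x_1)|$ too; symmetrically for $|F(x_3)-F(x_2)|$. Combining all reductions proves the claim.

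\textbf{Main obstacle.} I expect the delicate point to be establishing the Euclidean monotonicity of $f$ on $E$ cleanly — i.e. that for $x_1<x_2<x_3$ in $E\subset\R$ one has $|f(x_2)-f(x_1)|\vee|f(x_2)-f(x_3)|\lesssim|f(x_3)-f(x_1)|$ — since $f$ takes values in $\R^n$ rather than $\R$ and the images need not be collinear, so ``betweenness'' must be replaced by a quantitative argument via the quasisymmetry inequality (choosing basepoints carefully and possibly splitting on whether $|x_2-x_1|\le|x_2-x_3|$ or not). Everything else is bookkeeping with Remarks~\ref{rem:distbridges} and \ref{rem:bridgeseq} and the constant $C$ from \eqref{eq:const1}; the only care needed is to make sure the ``replace a point in a component by a nearby endpoint in $E$'' step is applied in the correct direction so that the replaced point genuinely lies between $x_2$ and the far endpoint.
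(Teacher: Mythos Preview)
Your approach is essentially the paper's---break up distances using the endpoints in $E$ of the components involved---but the paper packages the key step more cleanly. Before the lemma, the paper records equation~\eqref{eq:dist4}: for $x<y$ not in the same component of $\R\setminus E$, with $x'=\min(E\cap[x,y])$ and $y'=\max(E\cap[x,y])$,
\[
|F(x)-F(y)|\ \simeq\ |F(x)-F(x')|+|F(x')-F(y')|+|F(y')-F(y)|,
\]
derived from Corollary~\ref{cor:bridgeimages}, which compares the distance between two \emph{bridges} with the distance between their endpoint pairs. Your reductions amount to rederiving pieces of \eqref{eq:dist4} on the fly, but your direct appeal to Remark~\ref{rem:distbridges} has a gap: that remark is stated for $z\in\R^n$, whereas when $x_3$ lies in a component $I_3$ the point $F(x_3)$ sits on a bridge in coordinate direction $n_{I_3}>n$ and is \emph{not} in $\R^n$. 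If $n_{I_3}\neq n_{I_1}$ one can patch this by orthogonality, but if $n_{I_3}=n_{I_1}$ you genuinely need the relative-distance control from Corollary~\ref{cor:bridgeimages} (equivalently Remark~\ref{rem:relbridges}); the paper's \eqref{eq:dist4} is precisely the statement that absorbs these cases uniformly.

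Two smaller points. First, your parenthetical about replacing $x_1$ by $a_{I_1}$ ``if the whole of $\overline{I_1}$ lies to the left of $x_2$'' is backwards---in that situation you want $b_{I_1}$. The case that actually needs separate handling is when $x_1,x_2$ lie in the \emph{same} component $I_1$, where you cannot slide $x_1$ rightward past $x_2$; the paper treats this as its Case~1, combining \eqref{eq:dist4} with quasisymmetry of $f_{I_1}$. Second, the ``main obstacle'' you flag---that $|f(x_2)-f(x_1)|\vee|f(x_3)-f(x_2)|\lesssim|f(x_3)-f(x_1)|$ for $x_1<x_2<x_3$ in $E$---is a one-liner: with basepoint $x_1$ the quasisymmetry inequality gives $|f(x_1)-f(x_2)|\le\eta(|x_1-x_2|/|x_1-x_3|)\,|f(x_1)-f(x_3)|\le\eta(1)\,|f(x_1)-f(x_3)|$, and symmetrically with basepoint $x_3$. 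The real work is the bridge-to-bridge comparison, which is exactly what Corollary~\ref{cor:bridgeimages} and \eqref{eq:dist4} supply.
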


First we make an observation.
Let $x,y\in\R$ with $x<y$ that are not on the same component of $\R\setminus E$. Denote by $x',y'$ the minimum and maximum, respectively, of $E\cap[x,y]$. By Corollary \ref{cor:bridgeimages} and the quasisymmetry of $f$,
\begin{equation}\label{eq:dist4}
|F(x)-F(y)| \simeq |F(x)-F(x')| + |F(x')-F(y')| + |F(y') - F(y)|.
\end{equation}

\begin{proof}[{Proof of Lemma \ref{lem:mon}}]
Let $x_1,x_2,x_3 \in \R$ with $x_1 < x_2 < x_3$. We only show that $|F(x_2)-F(x_1)|\lesssim |F(x_3)-F(x_1)|$; the inequality $|F(x_2)-F(x_1)|\lesssim |F(x_3)-F(x_1)|$ is similar. 

If all three of them are in $E$ or in the same component $I$ of $\R\setminus E$ then the claim follows from the quasisymmetry of $f$ and $f_I$. Therefore, we may assume that at least one of the $x_1,x_2,x_3$ is in $\R\setminus E$.

\emph{Case 1}. Suppose that there exists a component $I$ of $\R \setminus E$ that contains exactly two of the $x_1,x_2,x_3$. Assume, for instance that $x_1,x_2\in I$ and $x_3\not\in I$; the case $x_2,x_3 \in I$ is similar. Let 
$x_2'$ and $x_3'$ be the minimum and maximum, respectively, of $E\cap [x_2,x_3]$. By (\ref{eq:dist4}) and the quasisymmetry of $F$ on $I$, $|F(x_3)-F(x_1)| \gtrsim |F(x_2')-F(x_1)| \gtrsim |F(x_2)-F(x_1)|$. 

\emph{Case 2}. Suppose that there is no component of $\R\setminus E$ containing two points from $x_1,x_2,x_3$. Let $x_1'$ and $x_2'$ be the minimum and maximum, respectively, of $E \cap [x_1,x_2]$ and  $x_2'',x_3'$ be the minimum and 
maximum, respectively, of $E\cap[x_2,x_3]$. Applying (\ref{eq:dist4}) on $x_1,x_3$ and quasisymmetry on $x_1',x_2'',x_3'$, 
$|F(x_3)-F(x_1)| \gtrsim |F(x_2'')-F(x_2')| + |F(x_2')-F(x_1')| + |F(x_1')-F(x_1)|$. Applying quasisymmetry on $x_2',x_2,x_2''$ and then (\ref{eq:dist4}) on $x_1,x_2$, 
$|F(x_3)-F(x_1)| \gtrsim |F(x_2)-F(x_2')| + |F(x_2')-F(x_1')| + |F(x_1')-F(x_1)| \gtrsim |F(x_2)-F(x_1)|$.
\end{proof}

\subsection{Proof of Theorem \ref{thm:main2}}\label{sec:proof1}

Let $x,y,z \in \R$ such that $|x-y|\leq |x-z|$. By Lemma \ref{lem:mon}, we may assume that $x$ is between $y$ and $z$. Without loss of generality we assume that $y<x<z$. 

Since $F|E$ is already quasisymmetric, we may assume that at least one of the $x,y,z$ is in $\R\setminus E$. The proof is divided in four cases.

For the first case, we use the following lemma that can easily be verified.

\begin{lem}\label{lem:replace}
Let $I = (a,b)$ be a component of $\R\setminus E$, $x_1\in I$ and $x_2\in E$. 

Suppose $x_1<x_2$. If $|x_2-b| > (4c)^{-1}|x_1-b|$ set $x_1'=b$. If $|x_2-b| \leq (4c)^{-1}|x_1-b|$ and $x_1\leq m_I$ set $x_1' = b_0$. If $|x_2-b| \leq (4c)^{-1}|x_1-b|$ and $x_1\in [b'_{n+1},b'_n]$ set $x_1' = b_{n+1}$. In each 
case, $|x_2-x_1'| \simeq |x_2-x_1|$ and  $|F(x_2)-F(x_1')| \simeq |F(x_2)-F(x_1)|$.

If $x_2<x_1$ replace $b,b_0,b_{n+1}$ by $a,a_0,a_{n}$, respectively, and define $x_1'$ similarly. The claim of the lemma holds in this case as well.
\end{lem}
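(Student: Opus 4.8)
The plan is to verify the two asserted comparisons --- $|x_2-x_1'| \simeq |x_2-x_1|$ and $|F(x_2)-F(x_1')| \simeq |F(x_2)-F(x_1)|$ --- by going through the three cases in the definition of $x_1'$ separately, and then noting that the case $x_2<x_1$ is symmetric. Throughout I will use that $f$ is $\eta$-quasisymmetric on $E$, that $f_I$ is $\eta'$-quasisymmetric on $\overline I$ by Lemma \ref{lem:key}, and that $\cB_{n_I}(I)$ is $4$-bi-Lipschitz equivalent to an interval of length $|I|$ by Remark \ref{rem:bridgeseq}; together with the defining relations $|f_I(a_k')-f(a_I)| = |f(a_k)-f(a_I)|$ and $|f_I(b_k')-f(b_I)| = |f(b_k)-f(b_I)|$ and the geometric ratios $|a_{k+1}-a_I|/|a_k-a_I| \in [\d_0/c,\d_0]$ (and likewise for the $b_k$), as well as $|a_0-a_I| \in [(2c)^{-1}|I|, 2^{-1}|I|]$.

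Consider first $x_1<x_2$, so $x_1\in I=(a,b)$ and we are replacing toward the right endpoint $b$. In the case $|x_2-b|>(4c)^{-1}|x_1-b|$ and $x_1' = b$: then $|x_2-x_1| \le |x_2-b|+|b-x_1| < (1+4c)|x_2-b| = (1+4c)|x_2-x_1'|$, and trivially $|x_2-x_1'|\le|x_2-x_1|$, giving the first comparison; the second follows because $F(b)=f(b)$ lies on the segment $[f(b_I),m_{\cB(I)}]$ together with the points $F(x_1)\in\cB_{n_I}(I)$, and one applies the $4$-bi-Lipschitz model of the bridge to see $|F(x_2)-F(b)|\simeq |F(x_2)-F(x_1)|$ once one knows, via (\ref{eq:dist4}) and Corollary \ref{cor:bridgeimages}, that $|F(x_2)-F(b)|$ already dominates $\diam F(I)$ up to constants --- which is exactly what the hypothesis $|x_2-b|>(4c)^{-1}|x_1-b|$ buys us after passing to $E$-points. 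In the remaining two cases $x_1$ is deep inside $I$ relative to $x_2$; here $x_1' = b_0$ if $x_1\le m_I$ and $x_1' = b_{n+1}$ if $x_1\in[b_{n+1}',b_n']$. Since $b_0,b_{n+1}\in E$ lie on the same side of $x_2$ with $|b_0 - b|, |b_{n+1}-b|$ comparable to $|x_1'' - b|$ where $x_1''\in\{m_I,b_n',b_{n+1}'\}$ is the reflection of the relevant $E$-point, the geometric-decay estimates give $|x_2 - x_1'|\simeq |x_2 - b|\simeq |x_2-x_1|$ (the hypothesis $|x_2-b|\le(4c)^{-1}|x_1-b|$ forces $|x_2-b|$ to be negligible compared to $|x_1-b|$, so $|x_2-x_1|\simeq|x_1-b|$, and similarly $|x_2-x_1'|\simeq|x_1'-b|\simeq|x_1-b|$ using the bounded ratio of neighbor intervals). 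The image comparison then follows from the defining identity $|f_I(b_k')-f(b_I)| = |f(b_k)-f(b_I)|$, the $\eta$-quasisymmetry of $f$ on $E$, and the $4$-bi-Lipschitz model of the bridge, which convert the distance $|F(x_2)-F(x_1')|$ into a comparison with $\diam f_I([x_1',m_I])\simeq \diam F(I)$ again via (\ref{eq:dist4}).

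The case $x_2<x_1$ is obtained by the symmetric replacement $b\mapsto a$, $b_0\mapsto a_0$, $b_{n+1}\mapsto a_n$ (note the index shift reflects that the sequence $\{a_k\}$ was indexed so that $a_0$ plays the role of the outermost term), and the identical estimates apply with the roles of the two endpoints of $I$ interchanged; no new argument is needed.

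The main obstacle I anticipate is the bookkeeping in the two ``deep interior'' subcases: one must be careful that when $x_1$ is close to the midpoint or to some $b_n'$, the point $x_1'$ chosen in $E$ sits at the correct scale relative to $x_2$, which requires combining the uniform-perfectness decay ratios on the $\{b_k\}$ side, the reflection $b_k'\mapsto b_k$, and the fact that $f_I$ records $|f_I(b_k')-f(b_I)| = |f(b_k)-f(b_I)|$ rather than a genuine rescaling of $|b_k'-b_I|$. Once one fixes the dictionary between the reflected interval $[m_I,b_I]$ and the sequence $\{b_k\}\subset E$, everything reduces to the already-established quasisymmetry of $f$, (\ref{eq:dist4}), Corollary \ref{cor:bridgeimages}, and Remark \ref{rem:bridgeseq}, so I expect each estimate to be a short computation rather than a genuinely new idea --- consistent with the paper's assertion that the lemma ``can easily be verified''.
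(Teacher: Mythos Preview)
The paper gives no proof of this lemma (it merely says the lemma ``can easily be verified''), so there is nothing to compare against; I evaluate your sketch on its own merits.

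Your case split is the right structure, but two of the three image/distance computations are wrong as written.

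\textbf{Case 1, image comparison.} You claim that the hypothesis $|x_2-b|>(4c)^{-1}|x_1-b|$ forces $|F(x_2)-F(b)|$ to dominate $\diam F(I)$. It does not: take $x_1$ very close to $b$ and $x_2\in E$ only slightly farther from $b$; the hypothesis holds yet $|F(x_2)-F(b)|$ is tiny compared with $\diam F(I)$. What the hypothesis actually gives is $|x_1-b|\lesssim |x_2-b|$, and the way to exploit this is to pick $k$ with $|b_k-b|\simeq |x_1-b|$ (possible by the geometric decay of the $b_k$), use the quasisymmetry of $f_I$ on $\overline I$ together with the defining identity $|f_I(b_k')-f(b)|=|f(b_k)-f(b)|$ to get $|F(x_1)-F(b)|\simeq |f(b_k)-f(b)|$, and then apply the quasisymmetry of $f$ on $E$ to the three $E$-points $b_k,b,x_2$ to obtain $|f(b_k)-f(b)|\lesssim |f(b)-f(x_2)|$. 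Combined with (\ref{eq:dist4}) this yields $|F(x_1)-F(x_2)|\simeq |F(b)-F(x_2)|$.

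\textbf{Cases 2--3.} Your displayed chain $|x_2-x_1'|\simeq |x_2-b|\simeq |x_2-x_1|$ is false: under the hypothesis $|x_2-b|\le (4c)^{-1}|x_1-b|$ the middle quantity is \emph{small}, not comparable to the others. Your own parenthetical gets it right --- the correct chain is $|x_2-x_1|\simeq |x_1-b|\simeq |x_1'-b|\simeq |x_2-x_1'|$, the last step needing that $|x_1'-b|=|b_{n+1}-b|$ is bounded below by a fixed fraction of $|x_1-b|$ while $|x_2-b|$ is a smaller fraction, so that the difference $|x_1'-x_2|$ stays comparable to $|x_1'-b|$. For the image side you write ``$\diam f_I([x_1',m_I])$'', but $x_1'\in E$ is not in $I$, so this expression is meaningless. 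The point is rather that $|F(x_1')-F(b)|=|f(b_{n+1})-f(b)|=|f_I(b_{n+1}')-f(b)|\simeq |F(x_1)-F(b)|$ by the quasisymmetry of $f_I$ (since $x_1\in[b_{n+1}',b_n']$), and then one compares $|F(x_1')-F(x_2)|$ and $|F(x_1)-F(x_2)|$ using (\ref{eq:dist4}) and the quasisymmetry of $f$ on the $E$-points $b_{n+1},b,x_2$.

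In short, the missing mechanism in all three subcases is the same: pass from the bridge point $F(x_1)$ to an $E$-point $b_k$ at the matching scale via the identity $|f_I(b_k')-f(b)|=|f(b_k)-f(b)|$, and then run the comparison entirely inside $E$ where $f$ is quasisymmetric. Once you make this explicit, each case is indeed a short computation.
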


\emph{Case 1}. Suppose that exactly one of the $x,y,z$ is in $\R\setminus E$.

\emph{Case 1.1}. Assume that $y \in \R\setminus E$ and $x,z\in E$. Let $y'$ be as in Lemma \ref{lem:replace} for the pair $x_1=y$, $x_2=x$. Then, $|y'-x|\simeq |y-x| \lesssim |x-z|$ and 
\[|F(y)-F(x)| \simeq |F(y')-F(x)| \lesssim |F(x)-F(z)|.\]

\emph{Case 1.2}. Assume that $z\in \R\setminus E$ and $x,y\in E$. We work as in Case 1.1.

\emph{Case 1.3}. Assume that $x \in \R\setminus E$ and $y,z\in E$. Let $x'$ be the point defined in Lemma \ref{lem:replace} for the pair $x_1=x$, $x_2=z$. Then, $|y-x'| = |y-x| + |x-x'| \lesssim |x-z| \simeq |x'-z|$ and by Lemma 
\ref{lem:mon},
\[ |F(x)-F(y)| \lesssim |F(x')-F(y)| \lesssim |F(x')-F(z)| \simeq |F(x)-F(z)|.\]

\emph{Case 2}. Suppose that exactly two of the $x,y,z$ are in the same component of $\R\setminus E$ and the third point is in $E$.

\emph{Case 2.1}. Assume that $x,y$ are in a component $(a,b)$ of $\R\setminus E$ and $z\in E$. 

If $|x-b| > |b-z|$ set $z' = b$. Note that $|x-z| \simeq |x-z'|$ and, by quasisymmetry of $F|(a,b)$ and Lemma \ref{lem:mon},
\[ |F(x)-F(y)| \lesssim |F(x)-F(z')| \lesssim |F(x)-F(z)|.\]

If $|x-b| \leq |b-z|$ then set $x'=b$. Note that $|x-y| \leq |x'-y|\lesssim |x-z| \simeq |x'-z|$. By Lemma \ref{lem:mon} and Case 1 for  $y,x',z$,
\[ |F(x)-F(y)| \lesssim |F(x')-F(y)| \lesssim |F(x')-F(z)| \lesssim |F(x)-F(z)|.\]

\emph{Case 2.2}. Assume that $x,z$ are in a component $(a,b)$ of $\R\setminus E$ and $y\in E$. 
If $|y-a|\leq|x-a|$ set $y'=a$ and if $|y-a|>|x-a|$ then set $x'=a$. In each case we work as in Case 2.1. 

For the next two cases we use the following lemma.

\begin{lem}\label{lem:replace2}
Let $(a_1,b_1)$, $(a_2,b_2)$ be two components of $\R\setminus E$ with $b_1<a_2$ and $x_1\in (a_1,b_1)$, $x_2\in (a_2,b_2)$. 

If $|a_1-b_1| \leq |a_2-b_2|$ set $x_1' = b_1$. Then, $|x_1-x_2| \simeq |x_1'-x_2|$ and $|F(x_2) - F(x_1)| \simeq |F(x_2) - F(x_1')|$.

If $|a_1-b_1| > |a_2-b_2|$ set $x_2' = a_2$. Then, $|x_1-x_2| \simeq |x_1-x_2'|$ and $|F(x_2) - F(x_1)| \simeq |F(x_2') - F(x_1)|$.
\end{lem}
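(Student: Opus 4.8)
The plan is to split each claim into two independent pieces: a metric estimate on $\R$, which comes only from the $c$-uniform perfectness of $E$, and an estimate for $F$, which combines the monotonicity of $F$ (Lemma~\ref{lem:mon}), the fact that each bridge has diameter comparable to the distance between the $f$-images of the endpoints of the underlying gap (Remark~\ref{rem:bridgeseq}), and the $\eta$-quasisymmetry of $f$. Write $J_1=(a_1,b_1)$ and $J_2=(a_2,b_2)$. By interchanging the roles of $J_1$ and $J_2$ it suffices to treat the first alternative, so I assume $|a_1-b_1|\le|a_2-b_2|$ and $x_1'=b_1$.

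For the metric estimate, Remark~\ref{rem:rel} produces, from the $c$-uniform perfectness of $E$, a constant $C_0>0$ depending only on $c$ with $d^*(\overline{J_1},\overline{J_2})\ge C_0$; since $|a_1-b_1|\le|a_2-b_2|$ this reads $|a_1-b_1|\le C_0^{-1}\dist(\overline{J_1},\overline{J_2})=C_0^{-1}(a_2-b_1)$. Because $x_1<b_1<a_2<x_2$, I write $|x_1-x_2|=(b_1-x_1)+(x_2-b_1)$ with $b_1-x_1<|a_1-b_1|\le C_0^{-1}(a_2-b_1)\le C_0^{-1}(x_2-b_1)=C_0^{-1}|x_1'-x_2|$, while trivially $|x_1'-x_2|\le|x_1-x_2|$; hence $|x_1-x_2|\simeq|x_1'-x_2|$.

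For $F$, the bound $|F(x_1')-F(x_2)|=|F(b_1)-F(x_2)|\lesssim|F(x_1)-F(x_2)|$ is immediate from Lemma~\ref{lem:mon} applied to $x_1<b_1<x_2$. For the reverse inequality I would use the triangle inequality $|F(x_1)-F(x_2)|\le|F(x_1)-F(b_1)|+|F(b_1)-F(x_2)|$ and dominate the first summand by the second. Since $x_1\in\overline{J_1}$ and $b_1$ is an endpoint of $J_1$, both $F(x_1)=f_{J_1}(x_1)$ and $F(b_1)=f(b_1)$ lie on the bridge $\cB_{n_{J_1}}(J_1)$, so Remark~\ref{rem:bridgeseq} gives $|F(x_1)-F(b_1)|\le\diam\cB_{n_{J_1}}(J_1)\lesssim|f(a_1)-f(b_1)|$. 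Now $|a_1-b_1|\le C_0^{-1}(a_2-b_1)$ means $|b_1-a_1|/|b_1-a_2|\le C_0^{-1}$, so the $\eta$-quasisymmetry of $f$ at $b_1$ yields $|f(b_1)-f(a_1)|\le\eta(C_0^{-1})\,|f(b_1)-f(a_2)|=\eta(C_0^{-1})\,|F(b_1)-F(a_2)|$, while Lemma~\ref{lem:mon} applied to $b_1<a_2<x_2$ gives $|F(b_1)-F(a_2)|\lesssim|F(b_1)-F(x_2)|$. Chaining, $|F(x_1)-F(b_1)|\lesssim|F(b_1)-F(x_2)|$, hence $|F(x_1)-F(x_2)|\lesssim|F(b_1)-F(x_2)|=|F(x_1')-F(x_2)|$; this settles the first alternative.

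The second alternative follows by the mirror-image argument: when $|a_1-b_1|>|a_2-b_2|$ and $x_2'=a_2$, uniform perfectness now gives $|a_2-b_2|\le C_0^{-1}(a_2-b_1)$, the metric estimate is symmetric, and for $F$ one bounds $|F(a_2)-F(x_2)|\le\diam\cB_{n_{J_2}}(J_2)\lesssim|f(a_2)-f(b_2)|\le\eta(C_0^{-1})|f(a_2)-f(b_1)|\lesssim|F(x_1)-F(a_2)|$, using quasisymmetry of $f$ at $a_2$ and Lemma~\ref{lem:mon} twice as before. I expect the one delicate point to be this upper bound on $|F(x_1)-F(x_2)|$: although $x_1$ may lie close to $a_1$, so that $F(x_1)$ sits near the far tip of the bridge over $J_1$, that whole bridge has diameter $\simeq|f(a_1)-f(b_1)|$, and the uniform perfectness of $E$ is exactly what forces this quantity to be small relative to the gap $|f(b_1)-f(a_2)|$; everything else is routine triangle-inequality bookkeeping together with the monotonicity lemma.
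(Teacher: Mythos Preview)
Your argument is correct and follows essentially the same route as the paper: the metric estimate via Remark~\ref{rem:rel}, one direction of the $F$-estimate from Lemma~\ref{lem:mon}, and the other direction by bounding $|F(x_1)-F(b_1)|$ by $|f(a_1)-f(b_1)|$ and then using the quasisymmetry of $f$ at $b_1$ together with Lemma~\ref{lem:mon}. The only cosmetic difference is that the paper controls $|F(x_1)-F(b_1)|$ by $|F(b_1)-F(a_1)|$ directly via Lemma~\ref{lem:mon} (applied to $a_1<x_1<b_1$), whereas you pass through $\diam\cB_{n_{J_1}}(J_1)$ using Remark~\ref{rem:bridgeseq}; these are equivalent.
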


\begin{proof}
Assume that $|a_1-b_1| \leq |a_2-b_2|$; the case $|a_2-b_2| \leq |a_1-b_1|$ is similar. By Remark \ref{rem:rel}, $|x_1-x_2| \simeq |x_1'-x_2|$. Moreover, by Lemma \ref{lem:mon},
\begin{align*}
|F(x_2) - F(x_1')| &\lesssim |F(x_2) - F(x_1)| \leq |F(x_2)-F(x_1')| + |F(x_1')-F(a_1)| \\
&\lesssim |F(x_2)-F(x_1')| + |F(x_1')-F(a_2)| \lesssim |F(x_2) - F(x_1')|.\qedhere
\end{align*}
\end{proof}

\emph{Case 3}. Suppose that exactly two of the $x,y,z$ are in $\R\setminus E$ but in different components.

\emph{Case 3.1}. Assume that $y\in(a_1,b_1)$, $x\in(a_2,b_2)$ and $z\in E$ where for each $i=1,2$, $(a_i,b_i)$ is a component of $\R\setminus E$ and $b_1 < a_2$. 

If $|a_1-b_1| \leq |a_2-b_2|$ then, by Lemma \ref{lem:replace2}, setting $y'=b_1$, we have $|x-y'| \simeq |x-y|$, $|F(x)-F(y')|\simeq |F(x)-F(y)|$. Now apply Case 1 for the points $y',x,z$.

If $|a_2-b_2| < |a_1-b_1|$ then, by Lemma \ref{lem:replace2}, setting $x' = a_2$, we have $|x'-y| \simeq |x-y|$ and $|F(x')-F(y)|\simeq |F(x)-F(y)|$. Moreover, $|x-z| \leq |x'-z| = |x-z| + |x-x'| \leq |x-z| +  |x-y| \leq 2|x-z|$. 
Thus, $|x-z|\simeq |x'-z|$ and applying Case 1 for the points $x',x,z$, we have $|F(z)-F(x)| \simeq |F(z)-F(x')|$. Now apply Case 1 for $y,x',z$.

\emph{Case 3.2}. Assume that $x\in(a_1,b_1)$, $z\in(a_2,b_2)$ and $y\in E$ where for each $i=1,2$, $(a_i,b_i)$ is a component of $\R\setminus E$ and $b_1 < a_2$. 

If $|a_1-b_1| \leq |a_2-b_2|$ then, $|x'-z|\simeq |x-z|$, $|F(x')-F(z)| \simeq |F(x)-F(z)|$, $|y-x| \lesssim |y-x'| \lesssim |x'-z|$, $|F(y)-F(x)| \lesssim |F(y)-F(x')| \lesssim |F(x')-F(z)|$ and we apply Case 1 for $y,x',z$.

If $|a_2-b_2| < |a_1-b_1|$ then set $z' = a_2$ and work as in Case 3.1.

\emph{Case 3.3}. Assume that $y\in(a_1,b_1)$, $z\in(a_2,b_2)$ and $x\in E$ where for each $i=1,2$, $(a_i,b_i)$ is a component of $\R\setminus E$ and $b_1 < a_2$. 

If $|a_1-b_1| \leq |a_2-b_2|$ then set $y' = a_1$. Since $|x-z|\simeq|x-y|+|x-z| \gtrsim |b_1-a_2|$ we have that $|x-y'|\simeq |x-y|$. Moreover, by Lemma \ref{lem:mon}, $|F(x)-F(y)|\lesssim|F(x)-F(y')|$ and we apply Case 1 for 
$y',x,z$.

If $|a_2-b_2| < |a_1-b_1|$ then set $z' = b_2$. As before, $|x-z|\simeq |x-z'|$. Furthermore, $|F(x)-F(z')| \simeq |F(x)-F(a_2)|$ when $|x-a_2|>|a_2-z|$ and $|F(x)-F(z')| \simeq |F(b_2)-F(a_2)|$ when $|x-a_2|\leq|a_2-z|$. In either 
case, $|F(x)-F(z)|\simeq |F(x)-F(z')|$ and we apply Case 1 for the points $y,x,z'$.

\emph{Case 4}. Suppose that $y,x,z\in\R\setminus E$. By Lemma \ref{lem:key}, we may assume that either $y$ or $z$ is not in the same component as $x$.


\emph{Case 4.1}. Assume that $y\in(a_1,b_1)$ and $x\in (a_2,b_2)$ where $(a_i,b_i)$ are components of $\R\setminus E$ and $b_1 < a_2$.

If $|b_1-a_1| \leq |b_2-a_2|$ then set $y' = b_1$ and, by Lemma \ref{lem:replace2}, $|x-y| \simeq |x-y'|$ and $|F(x)-F(y)|\simeq |F(x)-F(y')|$. Apply now Case 2 or Case 3 for the points $y',x,z$.

If $|b_2-a_2| < |b_1-a_1|$ then set $x' = a_2$ and, by Lemma \ref{lem:replace2}, $|x-y| \simeq |x'-y|$ and $|F(x)-F(y)| \simeq |F(x')-F(y)|$. As in Case 3.1, $|x-z|\simeq |x'-z|$ and applying Case 2 or Case 3 for the points $x',x,z$ 
we conclude that $|F(x)-F(x')| \lesssim |F(x)-F(z)|$ which implies $|F(x)-F(z)| \simeq |F(x')-F(z)|$. Now apply Case 2 or Case 3 on the points $y,x',z$.

\emph{Case 4.2}. Assume that $x\in(a_1,b_1)$, $z\in (a_2,b_2)$ where $(a_i,b_i)$ are components of $\R\setminus E$ and $b_1 < a_2$.

If $|b_2-a_2| \leq |b_1-a_1|$ then set $z' = a_2$ and work as in Case 4.1.

If $|b_1-a_1| < |b_2-a_2|$ then set $x' = b_1$ and, by Lemma \ref{lem:mon} and Lemma \ref{lem:replace2}, $|x'-y| = |x-y| + |x-x'| \lesssim |x-z|\simeq |x'-z|$, $|F(x')-F(z)| \simeq |F(x)-F(z)|$ and 
$|F(x)-F(y)| \lesssim |F(x')-F(y)|$. Apply now Case 2 or Case 3 for the points $y,x',z$.

\subsection{Proof of Theorem \ref{thm:main}}\label{sec:proof2}

By Corollary \ref{cor:relconnec} we only need to show the sufficiency in Theorem \ref{thm:main}. The proof is a mild modification of the proof of Theorem \ref{thm:main2}. We only outline the steps. 

Let $E\subset\R$ be an $M$-relatively connected set and let $f:E \to \R$ be a monotone $\eta$-quasisymmetric mapping. As before, we may assume that $E$ is a closed set that contains at least two points and $f$ is power 
quasisymmetric. Moreover, we may assume that $f$ is increasing.
 
\emph{Step 1}. First, we reduce the proof to the case that $E$ has no lower or upper bound, as in Section \ref{sec:prelim}. This time, however, in Case 1 and Case 2 we define $\tilde{f}(-a_n) = -a_n$, where $\{a_n\} \subset E$ is as 
in Section \ref{sec:prelim}. By Lemma \ref{lem:unbound}, $\tilde{E}$ is a closed relatively connected set and $\tilde{f} : \tilde{E} \to \R$ is an increasing quasisymmetric embedding.

\emph{Step 2}. We reduce the proof to the case that $E$ has no isolated points. If $E$ has isolated points, then define $\hat{E}$ and $\hat{f}$ as in Section \ref{sec:reduc}. Since $f(E) \subset \R$, then $\hat{f} : E \to \R$ and 
$\hat{f}$ is increasing. By Lemma \ref{lem:isolated}, $\hat{E}$ is a uniformly perfect closed set and $\hat{f}$ is quasisymmetric.

\emph{Step 3}. Let $I = (a,b)$ be a component of $\R\setminus E$. The bridge $\mathcal{B}(f(a),f(b))$ in this case is simply the interval $[f(a),f(b)]$. The mapping $f_I$ is defined as in Section \ref{sec:reflect}. The rest of the 
proof is similar to that of Theorem \ref{thm:main}.

\section{The quasisymmetric extension property in higher dimensions}\label{sec:examples}

This paper was motivated by the following question: given a uniformly perfect Cantor set $\mathcal{C}$ in $\R^n$ and a quasisymmetric mapping $f : \mathcal{C} \to \R^n$ that admits a homeomorphic extension on $\R^n$, is it always 
possible to extend $f$ quasisymmetrically in $\R^n$? While Theorem \ref{thm:main} shows that the answer is yes when $n= 1$, this is not the case when $n\geq 2$.
In fact we show a slightly stronger statement.

\begin{theorem}
For any $n\geq 2$, there exists a compact, countable, relatively connected set $E \subset \R^n$ and a bi-Lipschitz mapping $f : E \to \R^n$ that admits a homeomorphic but no quasisymmetric extension on $\R^n$.
\end{theorem}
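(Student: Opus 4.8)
The plan is to construct, for each $n\geq 2$, a self-similar Cantor-type set $E\subset\R^n$ which is \emph{countable} (hence built in a "converging sequence of scales" fashion rather than a full Cantor set) and relatively connected, together with an explicit bi-Lipschitz map $f$, such that the images of the two defining "halves" of $E$ are forced apart in a way that destroys the relative-distance bounds any quasisymmetric extension must respect. First I would set up the model in $\R^2$ (the general $n$ follows by taking $E\subset\R^2\times\{0\}\subset\R^n$ and, for the target, allowing the extension to move in all $n$ coordinates; nothing in the obstruction argument changes). Fix a small ratio $\lambda\in(0,1)$ and put two isometric copies of a scaled configuration, $E_+$ near the point $p$ and $E_-$ near a point $q$, with $\dist(E_+,E_-)\simeq\diam E_+\simeq\diam E_-$, so that $d^*(E_+,E_-)\simeq 1$; inside each copy, repeat, but \emph{rotate} the small copies by a fixed angle $\theta\neq 0$ relative to the parent. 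Iterating, $E=\bigcap_k$(finite unions of balls) has a point of accumulation and is countable and uniformly perfect, hence relatively connected.

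Next I would exhibit the bi-Lipschitz map. The key feature to engineer is a pair of sequences of "pieces" $P_k,Q_k\subset E$ with $d^*(P_k,Q_k)$ bounded above and below by fixed constants for every $k$, but such that, in $E$, the pieces $P_k$ and $Q_k$ are \emph{linked around} the accumulation point in a topologically nontrivial braided pattern (this is where $n\geq 2$ is essential and where $n=1$ fails — in $\R^1$ order-preservation rigidly controls everything). I would then define $f$ on $E$ to be bi-Lipschitz but to \emph{undo} the rotations, i.e.\ map the $k$-th generation so that $f(P_k)$ and $f(Q_k)$ become collinear and nested along a single segment; since $f$ is defined generation-by-generation with uniformly bounded distortion at each scale and $E$ is countable with controlled gaps, $f$ is globally bi-Lipschitz on $E$ (verify the two-point estimate by the standard "same generation / different generation" case split, using the gap bounds). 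The homeomorphic extendability of $f$ on $\R^n$ is arranged by construction: $f$ is the restriction of an ambient homeomorphism built by isotoping each finite-stage configuration, using the extra room in $\R^n$, $n\geq 2$, to realize the de-braiding isotopy — this is exactly the step that has no analogue in $\R^1$.

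The obstruction is then the heart of the matter. Suppose $F:\R^n\to\R^n$ is an $\eta'$-quasisymmetric extension of $f$. By \eqref{eq:relQS} applied to the pairs $(P_k,Q_k)$, we must have $d^*(f(P_k),f(Q_k))\leq\eta'(2d^*(P_k,Q_k))\leq\eta'(2C)$, a bound \emph{uniform in $k$}. But the whole point of the construction is that the de-braiding forces $f(P_k)$ to be swallowed inside (a small neighborhood of) $f(Q_k)$ as $k\to\infty$ — more precisely, $\dist(f(P_k),f(Q_k))/(\diam f(P_k)\wedge\diam f(Q_k))\to 0$ — contradicting the uniform lower bound $\tfrac12\phi(d^*(P_k,Q_k))\leq d^*(f(P_k),f(Q_k))$ from \eqref{eq:relQS}, OR one compares three points whose image cross-ratio $|F(a_k)-F(b_k)|/|F(a_k)-F(c_k)|$ tends to $0$ or $\infty$ while $|a_k-b_k|/|a_k-c_k|$ stays comparable to $1$, directly violating the $\eta'$ bound. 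I expect the main obstacle to be the simultaneous bookkeeping: choosing $\lambda$ and $\theta$ so that (i) $E$ is genuinely relatively connected with a single clean accumulation point, (ii) $f$ is honestly bi-Lipschitz and not merely quasisymmetric, (iii) the finite-stage de-braiding isotopies glue to a homeomorphism of $\R^n$, and (iv) the linking pattern is robust enough that \emph{any} quasisymmetric ambient extension — not just the particular homeomorphic one — is obstructed; point (iv) requires showing the topological linking of the pieces around the accumulation point is an isotopy invariant that no map with bounded metric distortion on $E$ can be compatible with, which I would prove by a modulus-of-continuity / three-point comparison argument rather than by an explicit linking-number computation.
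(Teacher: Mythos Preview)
Your obstruction argument has a fatal gap. You propose to find pieces $P_k,Q_k\subset E$ with $d^*(P_k,Q_k)\simeq 1$ but $d^*(f(P_k),f(Q_k))\to 0$, or three points $a_k,b_k,c_k\in E$ whose image cross-ratio blows up. But $f$ is \emph{bi-Lipschitz on $E$}: for any subsets $A,B\subset E$ one has $\diam f(A)\simeq\diam A$, $\dist(f(A),f(B))\simeq\dist(A,B)$, hence $d^*(f(A),f(B))\simeq d^*(A,B)$ with constants depending only on the bi-Lipschitz constant. The same holds for any triple of points in $E$. So no contradiction can come from comparing points \emph{of $E$}; the obstruction must live on points of $\R^n\setminus E$, where $F$ is genuinely unknown. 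Your write-up never explains what forces $F$ to behave badly off $E$. The ``braiding'' is purely topological, and since you have already granted that $f$ extends to a homeomorphism, every topological invariant is already compatible; what you need is a \emph{metric} mechanism that transmits constraints from $F|E=f$ to $F$ on the complement, and you have not supplied one.

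The paper's construction is quite different and shows what such a mechanism looks like. One builds $E$ to be a countable set which is $\epsilon$-dense (at the appropriate scale) on the boundary $X$ of a John domain $U\subset\R^n$ --- concretely, $U$ is a cube with an inward polynomial cusp removed. The map $f$ is the restriction to $E$ of a reflection $h$ that turns the inward cusp into an outward one, so $f(E)$ is dense on the boundary $X'$ of a domain $U'$ that is \emph{not} John. Now take points $x,y\in U$ near the tip of the cusp and a John arc $\gamma$ joining them in $\R^n\setminus E$. Because $E$ is dense on $X$ at every scale, a quasisymmetric extension $F$ is forced (by a connectivity argument comparing $\dist(\cdot,E)$ and $\dist(\cdot,E')$) to send $\gamma$ into $U'$; but any arc in $U'$ joining $F(x)$ to $F(y)$ must thread the outward cusp and cannot satisfy a uniform John condition, contradicting the fact that quasisymmetric images of John arcs are John arcs quantitatively. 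An extra layer of rescaled copies $U_m$ is used to defeat any fixed distortion function $\eta'$. The essential idea you are missing is this density of $E$ along a hypersurface, which is what pins down the behaviour of $F$ on the complement.
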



Before describing the construction we recall a definition. A domain $\Omega \subset \R^n$ is a \emph{$C$-John domain} if there exists $C\geq 1$ such that for any two points $x,y\in\Omega$, there is a path $\gamma \subset \Omega$ 
joining $x,y$ such that $\dist(z,\partial\Omega) \leq C^{-1}\min\{|x-z|,|y-z|\}$ for all $z\in\g$. In this case, the arc $\g$ is called a \emph{$C$-John arc}. It is a simple consequence of quasisymmetry that quasisymmetric images of 
John arcs are John arcs quantitatively. 

Fix now an integer $n\geq 2$ and define $h:\R^{n-1}\times\R$ with $h(v,t) = (v,2-t)$. Set $Q_0 = Q'_0 = [-1,1]^{n-1}\times[-1,1]$ and for each $k\in \N$ set 
\begin{align*}
Q_k &= [-4^{-k},4^{-k}]^{n-1}\times [2^{-k},2^{1-k}],
\end{align*}
$h_k = h|Q_k$ and $Q_k' = h(Q_k)$. For $k=0$ we set $h_0 = \text{Id}$. Define
\[ U = \text{int}(Q_0 \setminus \bigcup_{k\in\N} Q_k) \text{ , } U' = \text{int}(Q'_0 \cup \bigcup_{k\in\N} Q'_k)\]
and $X = \partial U$, $X' = \partial U'$. Note that $U$ is a $C$-John domain for some $C\geq 1$.

For each integer $m\geq 0$ let $\zeta_m : \R^n \to \R^n$ be a similarity that maps $[-2,2]^n$ onto $[\frac{1}{2}4^{-m},4^{-m}]\times[-4^{-m-1},4^{-m-1}]^{n-1}$. For each $m,k\geq 0$ let $Q_{m,k}$, $Q'_{m,k}$, $U_m$, $U_m'$, $X_m$ 
and $X_m'$ be the images of $Q_k$, $Q_k'$, $U$, $U'$, $X$ and $X'$, respectively, under $\zeta_m$. Note that each $U_m$ is $C$-John domain.

For each $m,k\geq 0$ let $E_{m,k}$ be a finite set on $\partial Q_{m,k}\cap X_m$ such that 
\begin{equation}\label{eq:density}
\dist(x,E_{m,k})< 8^{-k-m} \text{ for all }x\in\partial Q_{m,k} \cap  X_m.
\end{equation} 
Let $P_m = \zeta_m(0,\dots,0,0)$, 
$P_m^* = \zeta_m(0,\dots,0,-1/2)$ and $P = (0,\dots,0)$. Set
\begin{align*}
E &= \{P\}\cup\{P_m,P_m^*\}_{m\geq 0}\cup\bigcup_{m,k\geq 0}E_{m,k}.
\end{align*}
Clearly, $E$ is compact and countable. Moreover, by choosing the sets $E_{m,k}$ to be relatively connected, we may assume that $E$ is relatively connected.

Define $f: E \to \R^n$ with $f(P) = P$, $f(P_m^*) = P_m^*$, $f(P_m) =\zeta_m(0,\dots,0,2)$ and
\[ f|E_{m,k} = \zeta_m\circ h_k\circ\zeta_m^{-1}|E_{m,k}.\]
Denote $E'_{m,k} = f(E_{m,k})$ and $E' = f(E)$. It is easy to show that $f$ is bi-Lipschitz and can be extended to a homeomorphism of $\R^n$. Let $F:\R^n \to \R^n$ be such an extension of $f$. We briefly describe why $F$ can not be 
quasisymmetric; the details are left to the reader.

Assume that $F$ is $\eta$-quasisymmetric. Fix $m\in\N$ to be chosen later. Let $x\in U_m$ with $\dist(x,X_m) = \dist(x,E_{m,k}) = 4^{-m}4^{-k}$. By quasisymmetry, (\ref{eq:density}) and the fact that $F|E_{m,k}$ is an isometry, its 
image $x'=F(x)$ satisfies $c_14^{-m}4^{-k} \leq\dist(x', E_{m,k}') \leq c_24^{-m}4^{-k}$ for some $0<c_1<c_2$ depending on $\eta$. We claim that if $m$ is chosen big enough, $x'\in U_m'$. Indeed, let $\g$ be a $C$-John arc 
connecting $x$ and $P_m^*$ in $\R^n\setminus E$. If $x' \in \R^n \setminus \overline{U_m'}$ then there would be a point $z\in F(\g) \cap X'_m$. If $z\in\partial Q'_{m,l}$ then 
$\dist(z,E'_{m,l}) \leq 8^{-m-l} < 2^{-m}\min\{|z-x'|,|z-P^*_m|\}$ which contradicts the quasisymmetry of $F$ if $m$ is sufficiently big. 

Let now $m$ be chosen as above. Let $x,y \in U_m$ with 
\[\dist(x,X_m) = \dist(x,E_{m,k}) = \dist(y,X_m) = \dist(y,E_{m,k}) = 4^{-m}4^{-k}\] 
and with $|x-y| = 4^{-m}2^{-k-1}$ where $k$ is chosen later. Let $a,b$ be the points in $E_{m,k}$ closest to $x,y$ respectively. By quasisymmetry of $F$, (\ref{eq:density}) and the fact that $F|E_{m,k}$ is an isometry, there exist 
constants $C_1,C_2>0$ depending only on $\eta$ such that the images $x',y'$ of $x,y$ satisfy $\dist(x',E_{m,k}'),\dist(y',E_{m,k}') \leq C_14^{-m}4^{-k}$ and $|x'-y'| \geq C_2 4^{-m}2^{-k}$. Let $\sigma$ be a $C$-John arc joining 
$x$ and $y$ in $\R^n \setminus E$. As before, we can show that $\sigma$ is contained in $U_m$ and its image $\sigma'$ is contained in $U'_m$. Let $z\in \sigma' \cap Q_{m,k}'$ such that $|z-x'| = |z-y'|$. Then, 
$\min\{|z-x'|,|z-y'|\} \geq \frac{1}{2}C_22^{-k}4^{-m}$ while $\dist(z,E_{m,k}') \leq \frac{1}{2}4^{-k}4^{-m}$
and the John condition for $\sigma'$ fails if $k$ is sufficiently big. The latter contradicts the quasisymmetry of $F$.

\begin{remark}\label{rem:Cantor}
Let $\mathscr{C}$ be the standard ternary Cantor set in $[-\frac12,\frac12]$. If in the above construction we replace the finite sets $E_{m,k}$ by uniformly perfect Cantor sets $\mathcal{C}_{m,k}$ satisfying (\ref{eq:density}), and 
the points $P_m^*$ by sets $\mathcal{C}_m = \zeta_m(\mathscr{C}\times\{(0,\dots,0,\frac12)\})$, then we obtain a Cantor set
\[ \mathcal{C} = \{P\}\cup\{P_m\}_{m\geq 0}\cup\bigcup_{m\geq 0}\mathcal{C}_m\cup\bigcup_{m,k\geq 0}\mathcal{C}_{m,k},\]
for which the mapping $f$ defined as above is bi-Lipschitz and admits a homeomorphic extension on $\R^n$ but no quasisymmetric extension on $\R^n$.
\end{remark}

\bibliographystyle{amsplain}
\bibliography{proposal}

\end{document}